\theoremstyle{fancy}
\newtheorem{theorem}{Theorem}[section]
\newtheorem{lemma}[theorem]{Lemma}
\newtheorem{remark}[theorem]{Remark}
\newtheorem{question}[theorem]{Question}
\newtheorem{example}[theorem]{Example}
\newcommand{\cL}{\mathcal{L}}
\begin{document}
\title{Local Demailly-Bouche's holomorphic Morse inequalities}
\author{Zhiwei Wang}
\address{ School
of Mathematical Sciences\\Beijing Normal University\\ Beijing 100875\\ P. R. China}
\email{zhiwei@bnu.edu.cn}

\thanks{The  author was partially supported by the Fundamental Research Funds for the Central Universities and by the NSFC grant NSFC-11701031}

\begin{abstract}Let $(X,\omega)$ be a Hermitian manifold and let $(E,h^E)$, $(F,h^F)$ be  two  Hermitian holomorphic line bundle over $X$. Suppose  that the maximal rank of the Chern curvature $c(E)$ of $E$ is $r$, and the kernel of $c(E)$ is foliated, i.e. there is a foliation $Y$ of $X$, of complex codimension $r$, such that the tangent space of the leaf at each point $x\in X$ is contained in the kernel of $c(E)$.  In this paper,   local versions of Demailly-Bouche's holomorphic Morse inequalities (which give asymptotic  bounds for cohomology groups  $H^{q}(X,E^k\otimes F^l)$ as $k,l,k/l\rightarrow \infty$) are presented.  The local version holds on any Hermitian manifold regardless of compactness and completeness. The proof is a variation of Berman's method to  derive  holomorphic Morse inequalities on compact complex manifolds with boundary.
\end{abstract}
\subjclass[2010]{32A25, 32L10, 32L20}
\keywords{Bergman kernel, Holomorphic Morse inequalities, Localization}

\maketitle

\setlength{\baselineskip}{17pt}

\section{Introduction}

Let $X$ be a compact complex manifold of complex dimension $n$. Let $(L,h^L)$  (resp. $(E,h^E)$) be a hermitian holomorphic line (resp. vector ) bundle over $X$, where the rank of $E$ is $r$. Let $\nabla^L$ be Chern connection of $L$ with respect to the hermitian metric $h^L$, and $c(L)=\frac{\sqrt{-1}}{2\pi}(\nabla^L)^2$ be the Chern curvature. Denote by $X(q)$ the open subset of $X$ on which  $c(L)$ has strictly $q$ negative eigenvalues and $n-q$ positive eigenvalues and $X(\leq q)=\cup_{i\leq q}X(i)$. Let $H^q(X,L^k\otimes E)$ be the cohomology group which is isomorphic to $H^{0,q}_{\overline{\partial}}(X,L^k\otimes E)$ by the Dolbeault lemma \cite{GH}.

The celebrated    holomorphic Morse inequalities read that as $k\rightarrow \infty$, the following estimates hold
\begin{align}
\dim_\mathbb{C} H^q(X, L^k\otimes E)\leq (-1)^q r\frac{k^n}{n!}\int_{X(q)}(c(E))^n+o(k^n),\label{Dem1} \\
\sum_{0\leq j\leq q}(-1)^{q-j}\dim_\mathbb{C} H^j(X,L^k\otimes E)\leq (-1)^q r\frac{k^n}{n!}\int_{X(\leq q)}(c(E))^n+o(k^n)\label{Dem2}.
\end{align}
Where (\ref{Dem1}) and (\ref{Dem2}) are called weak and strong Holomorphic Morse inequalities respectively.

 Holomorphic Morse inequalities were first introduced in \cite{Dem 85} by Demailly to improve Siu's solution of the Grauert-Riemenschneider conjecture in \cite{Siu 84} which states that if $X$ carries  line bundle $L$ with a smooth Hermitian metric $h$, such that the associated Chern curvature $i\Theta_h$  is semi-positive， and positive at least at  one point,  then $X$ is Moishezon, i.e. birational to a projective variety. It is worth to mention that, Demailly's holomorphic Morse inequalities give a criterion of a line bundle to be big  only  through the positivity of a integration of the  curvature form.  It is proved by Ji-Shiffman \cite{JiS93} that, a compact complex manifold is Moishezon, if and only if it carries a holomorphic line bundle equppied with a singular Hermitian metric such that the curvature current associated to the singular metric is a K\"{a}hler current.  A closed real $(1,1)$-current $T$ is said to be a K\"{a}hler current, if there is a $\varepsilon>0$, such that $T\geq \varepsilon\omega$ in the sense of current. Namely, $(L,h)$ is big if and only if  $\int_{X(\leq 1)}(i\Theta_h)^n>0$.  Moreover, holomorphic Morse inequalities express asymptotic bounds on the individual cohomology of tensor bundles of holomorphic line bundles, it is a useful complement to the Riemann-Roch formula. Holomorphic Morse inequalities are  applied to many situations to study important problems \cite{Berman 06, Bou02,Cao13, Dem11,Pop08,Pop13,Ya17,Ye09,W16}.

 Recently, there are may generalizations of Demailly's holomorphic Morse inequalities   \cite{Berman 04, Berman 05, Bon98, Bouche 91, Hs14, HL15, HL16,HM12, MM07, Ma92,Ma96,Pu17, TCM01}  due to increasing  interests in different research topics. One of highlights, is the recent work of Hsiao-Li \cite{HL15, HL16},  they derived    CR versions of holomorphic Morse inequalities, which closely relates to the embedding problem of CR manifold with transversal CR $S^1$-action. We refer to \cite{HL15, HL16} and reference therein.

  The original proof of  Demailly was  inspired by Witten's analytic proof of the classical Morse inequalities for the Betti numbers of a compact real manifold \cite{Witten 82}.  Subsequently Bismut \cite{Bis 87},  Bouche \cite{Bouche 94}, Demailly \cite{Dem89}, Ma-Marinescu \cite{MM07} gave alternative proofs through asymptotic estimates of the heat kernel of the $\overline{\partial}$-Laplacian which were quite delicate analytic arguments.   Recently, Berndtsson \cite{Ber 03},  Berman \cite{Berman 04} indicated the  proof of  these inequalities in an  elementary way based on the estimate for the Bergman kernel for the space $H^q(X,L^k\otimes E)$. In fact, Berman \cite{Berman 04} proved a local version of holomorphic Morse inequalities regardless whether the manifold is compact or non-compact, which could be used  to study other problems in complex geometry, such as asymptotics of eigenvalues of  super Toeplitz operator, sampling sequences \cite{Berman 06}.

From Demailly's holomorphic  Morse inequalities, we know that if $c(L)$ is not everywhere degenerate, then there always exist $q$ such that $X(q)\neq \emptyset$, thus $h^q\sim Ck^n$. But if  $c(L)$ is degenerate everywhere, the only thing we know from Demailly's holomorphic Morse inequalities is that $h^q\sim o(k^n)$. It is natural to ask can we get a better estimate of the term $o(k^n)$?  It was not until Bouche \cite{Bouche 91},  who proved the following theorem, which  can make us  understand the term $o(k^n)$ in the degenerate case better.

\begin{theorem}[c.f.\cite{Bouche 91}]\label{Bou} Let $(X,\omega)$ be a compact connected Hermitian manifold of complex dimension $n$.
Suppose $(E,h^E)$, $(F,h^F)$ are two hermitian holomorphic line bundle over $X$, $(G,h^G)$ be a hermitian holomorphic vector bundle of rank $g$ over $X$. Let $\nabla^ E$, $\nabla^F$ be the Chern connection of $E$, and $F$, and  $c(E)=\frac{\sqrt{-1}}{2\pi}(\nabla^E)^2$, $c(F)=\frac{\sqrt{-1}}{2\pi}(\nabla^F)^2$ be the Chern curvature form of $E$ and $F$ respectively. Assume that the maximal rank of $ic(E)$ is $r$ and the kernel of $c(E)$ is foliated, i.e. there is a foliation $Y$ of $X$, of codimension $r$, such that the tangent space of the leaf at each point $x\in X$ is contained in the kernel of $c(E)_x$.  This
is automatically satisfied if the rank of the kernel of the form $c(E)$ is constant since $c(E)$ is a closed form. For any direct sum decomposition of $TX=TY\oplus NY$, we can define a real
$(1,1)$-form $\Theta$ which equals to the direct sum of the form induced by $c(E)$ on $NY$ and by $c(F)$ on $TY$. For $q=0,1,\cdots, n,$ we define $X(q)$, the open set on which $\Theta$ have exactly $q$ negative eigenvalues and $n-q$ positive eigenvalues, and $X(\leq q)=\cup_{i\leq q}X(i)$. Then by letting $k\rightarrow +\infty$, $l\rightarrow +\infty$, and $\frac{k}{l}\rightarrow +\infty$, we have the following inequalities
\begin{align}
 \dim_\mathbb{C} H^q(X, E^k\otimes F^l\otimes &G )\label{Bou1}\\
\leq (-1)^q g\frac{k^r}{r!}\frac{l^{n-r}}{(n-r)!}&\int_{X(q)}(c(E))^r\wedge(c(F))^{n-r}+o(k^rl^{n-r}),\notag \\
\sum_{0\leq j\leq q}(-1)^{q-j}\dim_\mathbb{C} H^j(X,&E^k\otimes F^l\otimes G)\label{Bou2}\\
\leq (-1)^q g\frac{k^r}{r!}&\frac{l^{n-r}}{(n-r)!}\int_{X(\leq q)}(c(E))^r\wedge(c(F))^{n-r}+o(k^rl^{n-r}).\notag
\end{align}
  \end{theorem}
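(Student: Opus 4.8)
The strategy is to localize the problem and reduce it to a model-space computation, following the Bergman-kernel approach of Berman as adapted to the Demailly–Bouche setting. The plan is to work with the space $H^q(X,E^k\otimes F^l\otimes G)$ and express $\dim_{\mathbb C} H^q$ (or the alternating sum $\sum_{j\le q}(-1)^{q-j}\dim H^j$) as an integral over $X$ of the pointwise trace of the Bergman kernel of harmonic $(0,q)$-forms, $\int_X B^q_{k,l}(x)\,dV(x)$, where $B^q_{k,l}(x)$ is the trace over the fibre at $x$. The heart of the argument is the pointwise asymptotic bound
\begin{equation}
\limsup_{k,l\to\infty,\ k/l\to\infty}\ \frac{r!\,(n-r)!}{k^r\,l^{n-r}}\,B^q_{k,l}(x)\ \le\ g\cdot\mathbf{1}_{X(q)}(x)\,\bigl|\det'(\Theta_x)\bigr|,
\end{equation}
where $\det'$ denotes the product of the nonzero eigenvalues of the model curvature $\Theta_x$ built from $c(E)$ on $NY$ and $c(F)$ on $TY$; integrating this and applying a domination/Fatou argument then yields \eqref{Bou1}, and the same machinery applied to the spectral-space version (forms with eigenvalue $\le$ some threshold) gives the strong inequality \eqref{Bou2}.

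First I would set up the local picture: fix $x_0\in X$, choose normal coordinates and local holomorphic frames so that near $x_0$ the metrics $h^E$, $h^F$ are represented by weights $e^{-k\phi_E-l\phi_F}$ with $\phi_E$, $\phi_F$ having prescribed Hessians $c(E)_{x_0}$, $c(F)_{x_0}$ at the origin. After the rescaling $z\mapsto z/\sqrt{k}$ in the $r$ directions transverse to the foliation and $z\mapsto z/\sqrt{l}$ in the $n-r$ directions along the leaves — which is exactly the rescaling that matches the two growth rates $k^r$ and $l^{n-r}$ — the rescaled weights converge to a quadratic model weight on $\mathbb C^n$, and $G$ contributes only its rank $g$ in the limit. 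The model operator is the $\bar\partial$-Laplacian for a constant-curvature line bundle on $\mathbb C^n$ whose curvature has $r$ eigenvalues from $c(E)$ and $n-r$ eigenvalues from $c(F)$; its space of $L^2$ harmonic $(0,q)$-forms is nonzero precisely when the signature data puts $x_0$ in $X(q)$, and then the Bergman density at the origin is the explicit Gaussian constant $g\,|\det'\Theta_{x_0}|/(r!(n-r)!)\cdot\pi^{-n}$-type expression (the standard computation for the harmonic space of the magnetic Laplacian). The local-model extremal characterization of the Bergman kernel (a sup over $L^2$-normalized holomorphic-in-the-appropriate-sense sections, à la Berman) is what lets one pass from the true $B^q_{k,l}$ to this model value as an upper bound.

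The two genuine obstacles are the following. First, because $c(E)$ is degenerate, the rescaling is anisotropic ($\sqrt{k}$ versus $\sqrt{l}$ in different variable groups) and one must check that the off-diagonal terms coupling leaf-directions and transverse-directions in the full weight are negligible after rescaling; this is where the \emph{foliation hypothesis} is essential — it guarantees that the kernel distribution of $c(E)$ integrates to honest leaves, so one can choose the splitting $TX=TY\oplus NY$ coherently in a neighbourhood and the cross terms are lower order. I expect this to be the main technical difficulty: controlling the error in the weight expansion uniformly enough that the local Bergman-kernel bound survives the double limit $k,l,k/l\to\infty$, and in particular showing the relevant sub-mean-value / Bergman extremal inequalities hold with constants independent of $k,l$ on shrinking polydiscs. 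Second, one needs a uniform $L^1_{\mathrm{loc}}$ domination for $B^q_{k,l}(x)/(k^rl^{n-r})$ to justify passing the $\limsup$ inside the integral $\int_X$; on a compact $X$ this follows from a crude a priori bound (e.g. a Bochner–Kodaira estimate giving $B^q_{k,l}\le C k^r l^{n-r}$ pointwise), so here the plan is to establish that uniform bound first and then invoke reverse Fatou. Once both points are in place, integrating the pointwise bound over $X$ (resp. over $X(\le q)$ for the spectral truncation used in the strong inequality) and recognizing $\int_{X(q)}|\det'\Theta|\,dV=(-1)^q\int_{X(q)}(c(E))^r\wedge(c(F))^{n-r}$ via the sign of $\Theta$ on $X(q)$ completes the proof.
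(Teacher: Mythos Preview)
Your proposal is correct and follows essentially the same approach as the paper: Berman's Bergman-kernel localization with the anisotropic rescaling $(z',z'')\mapsto(z'/\sqrt{k},\,z''/\sqrt{l})$, reduction to the constant-curvature model on $\mathbb{C}^n$, the foliation hypothesis to kill the cross terms in the weight expansion, a G{\aa}rding-type uniform bound plus Fatou for the weak inequality, and the low-eigenvalue spectral truncation together with the standard Witten-type lemma for the strong inequality. The only minor deviation is that the paper obtains the uniform pointwise domination via G{\aa}rding's inequality on the scaled ball rather than a Bochner--Kodaira estimate, but this is an implementation detail within the same scheme.
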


It  is pointed out by Bouche \cite{Bouche 91} that,  from the above Theorem, one can  get that if $E$ satisfies the same condition as in the above theorem, then when $k$ tends to $+\infty$, we have $\dim_\mathbb{C} H^q(X, E^k\otimes G)\leq Ck^r$.  This makes the term $o(k^n)$ more like "it should be".

The original proof in \cite{Bouche 91} of Theorem \ref{Bou} was along the way of Demailly \cite{Dem 85}, but doing more delicate estimates.  It is natural to ask if  we can get a local version of Demailly-Bouche's  holomorphic Morse inequalities and then we can find more applications, e.g,  in asymptotics of  super Toeplitz operator and sampling sequences \cite{Berman 06}. This is the main motivation of this paper. 

Let $(X, \omega)$ be a compact complex manifold with complex dimension  $n$,   $(E, h^E)$ and $(F,h^F)$ be holomorphic hermitian line bundles over $X$.
 By Hodge theory,  $H^{0,q}_{\overline{\partial}}(X,E^k\otimes F^l)$, the $(0,q)$-th Dolbeault
cohomology group is of finite dimension and
isomorphic to the space of harmonic forms with values in $E^k\otimes
F^l$ which was denoted by $\mathcal{H}^{0,q}(X,E^k\otimes
F^l)$. There is a canonical $L^2$ metric of
this space induced from the hermitian metric of $E,F$ and of the
hermitian metric of the manifold $X$. So we can choose an
orthonormal basis $\{\Psi_1,\cdots, \Psi_N\}$ of
$\mathcal{H}^{0,q}(X,E^k\otimes F^l)$, then we define the
so-called Bergman kernel function and the extremal function to be
\begin{align}
B^{q,k,l}_X(x)=\sum_{i=1}^N|\Psi_i(x)|^2.\notag
\end{align}
\begin{align}
S^{q,k,l}_X(x)=\sup\frac{|\alpha(x)|^2}{\| \alpha \|^2_X}.\notag
\end{align}

There is  also  a component version  of $S^{q,k,l}_X(x)$. For a given
orthonormal frame $e^I_x$ in $\wedge^{0,q}_x(X,E^k\otimes F^l)$, set
\begin{align}
S^{q,k,l}_{X,I}(x)=\sup\frac{|\alpha_I(x)|^2}{\| \alpha \|^2_X}.\notag
\end{align}

It is proved in \cite{Berman 04}  that the above kernels satisfy the following inequality
\begin{align}\label{lemma 2.1}
S^{q,k,l}_X(x)\leq B^{q,k,l}_X(x)\leq \sum_IS^{q,k,l}_{X,I}(x).
\end{align}

 Now, we are on the way to  introduce the main results in this paper.  Firstly, by adapting Berman's localization technique to our situation, we get the following local version of Demailly-Bouche's weak holomorphic Morse inequalities.
\begin{theorem}\label{Main theorem 1}
Given a (compact or noncompact) Hermitian manifold $(X,\omega)$, let $E, F$ be two holomorphic line bundles satisfying the assumption in Theorem \ref{Bou}. We can define metrics $\omega_{k,l}$ and $\omega_0$ (see Section \ref{localization}) on $X$. Then the Bergman kernel function $B^{q,k,l}_X$ and the extremal function $S^{q,k,l}_X$ of the space of the global $\overline{\partial}$-harmonic $(0,q)$-forms with values in $E^k\otimes F^l$, satisfy
\begin{align}
\limsup_{k,l;\frac{k}{l}\rightarrow \infty}B^{q,k,l}_X(x)\leq B^q_{x,\mathbb{C}^n}(0),~~~  \limsup_{k,l,\frac{k}{l}\rightarrow \infty}S^{q,k,l}_X(x)\leq S^q_{x,\mathbb{C}^n}(0),\notag
\end{align}
where
\begin{align}
B^q_{x,\mathbb{C}^n}(0)=S^q_{x,\mathbb{C}^n}(0)=1_{X(q)}(x)\big|det_{\omega_0}(\Theta)_x\big|,\notag
\end{align}
and $\lim_{k,l,\frac{k}{l}}B^{q,k,l}_X(x)=\lim_{k,l,\frac{k}{l}}S^{q,k,l}_X(x)$ if one of the limits exists.
\end{theorem}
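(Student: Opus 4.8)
The plan is to follow Berman's localization scheme, adapted to the anisotropic situation forced on us by the two powers $k,l$ and by the foliated kernel of $c(E)$. Fix a point $x\in X$. First I would choose local holomorphic coordinates $z=(z',z'')\in\C^r\times\C^{n-r}$ centred at $x$ that straighten the foliation $Y$, so that the leaves near $x$ are the slices $\{z'=\mathrm{const}\}$, together with holomorphic frames $e_E,e_F$ of $E,F$ near $x$, with local weights $\phi=-\log|e_E|^2_{h^E}$ and $\psi=-\log|e_F|^2_{h^F}$ normalised to vanish to second order at $0$ with complex Hessians $c(E)_x$ and $c(F)_x$ there. The crucial preliminary observation is that, since $c(E)=\tfrac{i}{2\pi}\ddebar\phi$ has the leaf directions in its kernel, it has no $d\bar z_j$-component for $j>r$ in these coordinates; hence, after modifying $e_E$ by an invertible holomorphic function (equivalently $\phi$ by a pluriharmonic function), one may arrange that $\phi$ depends only on the transverse variable $z'$ in a neighbourhood of $x$. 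This is precisely what will make the two different scalings compatible.

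Next comes the rescaling. Put $w'=\sqrt{k}\,z'$, $w''=\sqrt{l}\,z''$; given a global $\debar$-harmonic $(0,q)$-form $\alpha$ with values in $E^k\otimes F^l$, write $\alpha$ in the frame $e_E^{\otimes k}\otimes e_F^{\otimes l}$ and pull its coefficients back under this dilation to obtain a form $\tilde\alpha$ on the region $\{|w'|<\sqrt{k}\rho,\ |w''|<\sqrt{l}\rho\}$, which exhausts $\C^n$ as $k,l\to\infty$. Because $\phi=\phi(z')$, the rescaled $E$-weight $k\phi(w'/\sqrt{k})$ converges in $C^\infty_{\mathrm{loc}}$ to $\tfrac12\langle c(E)_x w',w'\rangle$; because $k/l\to\infty$, the pure-$z'$ and mixed terms of $l\psi(w'/\sqrt{k},w''/\sqrt{l})$ carry factors $l/k$ or $\sqrt{l/k}$ and disappear, leaving $\tfrac12\langle c(F)_x|_{TY}\,w'',w''\rangle$. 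Together with the flattening of $\omega$ — this is where the reference metrics $\omega_{k,l}$, $\omega_0$ of Section \ref{localization} enter, and the pointwise and $L^2$ norms of the theorem are taken with respect to $\omega_{k,l}$ precisely so that they transform to the model ones — the rescaled $\debar$-operators and their formal adjoints converge to the model operators on $(\C^n,\Phi_0)$ with $\Phi_0(w)=\tfrac12\langle c(E)_x w',w'\rangle+\tfrac12\langle c(F)_x|_{TY}\,w'',w''\rangle$, whose associated curvature is exactly $\Theta_x$; in particular the rescaled $\debar$-Laplacians $\Box_{k,l}$ converge to the model Laplacian $\Box_0$ on $(\C^n,\Phi_0)$.

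I would then run the extremal-function argument for $S^{q,k,l}_X$ and, componentwise, for each $S^{q,k,l}_{X,I}$; note that this argument uses only the extremal characterisation and is insensitive to the (possibly infinite) dimension of the harmonic space, which is what permits $X$ to be non-compact. Suppose that along some sequence with $k,l,k/l\to\infty$ one has $S^{q,k,l}_X(x)\to\Lambda$; choose near-extremal harmonic forms $\alpha$ with $\|\alpha\|_X=1$ and $|\alpha(x)|^2\to\Lambda$. Their rescalings $\tilde\alpha$ satisfy $\Box_{k,l}\tilde\alpha=0$ and $\|\tilde\alpha\|_{L^2(B_R)}\le 1$ for every fixed $R$; since the $\Box_{k,l}$ are uniformly elliptic and converge in $C^\infty_{\mathrm{loc}}$, interior elliptic estimates give uniform bounds on $\tilde\alpha$ in every $C^m(B_R)$, hence a subsequence converging in $C^\infty_{\mathrm{loc}}(\C^n)$ to a form $\beta$ with $\Box_0\beta=0$, $\|\beta\|_{L^2(\C^n,\Phi_0)}\le 1$ and $|\beta(0)|^2=\Lambda$. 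Thus $\beta$ belongs to the model harmonic space and $\Lambda=|\beta(0)|^2\le S^q_{x,\C^n}(0)$; the same argument yields $\limsup S^{q,k,l}_{X,I}(x)\le S^q_{x,\C^n,I}(0)$ for each $I$. Summing over $I$ and using $S^{q,k,l}_X\le B^{q,k,l}_X\le\sum_I S^{q,k,l}_{X,I}$ from (\ref{lemma 2.1}) then bounds $B^{q,k,l}_X$. It remains to identify the model: the Bargmann--Fock space $\mathcal H^q(\C^n,\Phi_0)$ is non-zero precisely when $\Theta_x$ has signature $(q,n-q)$, i.e. $x\in X(q)$, in which case it is carried by the single multi-index $I_0$ of negative directions and one computes $B^q_{x,\C^n}(0)=S^q_{x,\C^n}(0)=\bigl|\det\nolimits_{\omega_0}(\Theta)_x\bigr|$, while $S^q_{x,\C^n,I}(0)=0$ for $I\ne I_0$. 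Since then $S^{q,k,l}_{X,I_0}\le S^{q,k,l}_X\le\sum_I S^{q,k,l}_{X,I}$ and $S^{q,k,l}_{X,I_0}\le B^{q,k,l}_X\le\sum_I S^{q,k,l}_{X,I}$ with the two outer sequences sharing the same limit (the off-diagonal components tend to $0$), $\lim B^{q,k,l}_X(x)$ and $\lim S^{q,k,l}_X(x)$ exist together and agree.

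\textbf{The main difficulty.} The delicate point is the compatibility of the two scalings, which relies entirely on trivialising $E$ near $x$ so that its weight is a function of the transverse coordinates $z'$ alone; here the foliation hypothesis is essential rather than cosmetic. One must then check that all error terms — higher Taylor coefficients of $\phi,\psi$ and of $\omega$, and the failure of $Y$ to be exactly straight away from $x$ — are uniformly small on the rescaled regions as $k,l,k/l\to\infty$, so that the limiting operator is genuinely the constant-coefficient model $\Box_0$ and the interior elliptic estimates hold with constants independent of $k,l$.
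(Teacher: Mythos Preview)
Your proposal is correct and follows essentially the same route as the paper: anisotropic rescaling $(z',z'')\mapsto(z'/\sqrt{k},z''/\sqrt{l})$, convergence of the rescaled weights and Laplacians to the flat model $(\C^n,\gamma_0)$ with curvature $\Theta_x$, interior elliptic estimates and compactness to pass near-extremal harmonic sequences to model-harmonic limits, and then the componentwise sandwich $S^{q,k,l}_X\le B^{q,k,l}_X\le\sum_I S^{q,k,l}_{X,I}$ combined with the Bargmann--Fock computation $S^q_{x,\C^n,I}(0)=0$ for $I\neq I_0$. The only noteworthy variation is your normalisation of the $E$-frame so that $\phi=\phi(z')$ exactly (legitimate once the foliation is holomorphically straightened, since then $\partial_{z_i}\phi$ is holomorphic for $i>r$ and one may subtract a pluriharmonic correction); the paper keeps a general frame and instead argues that the cubic remainder of $\phi$ must carry at least two factors from $\{z_i,\bar z_i\}_{i\le r}$ so that $kO_1(k,l)\to 0$ after scaling, but both devices encode the foliation hypothesis in the same way and the remainder of the argument is identical.
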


Furthermore, when $X$ is compact, we show that  the above local Demailly-Bouche's weak holomorphic Morse inequalities can be extended to an asymptotic equality. Denote by $ B^{q,k,l}_{\leq \mu_{k,l}}$   the Bergman kernel function of the space spanned by all the eigenforms of the $\overline{\partial}$-Laplacian, whose eigenvalues are bounded by $\mu_{k,l}$.
\begin{theorem}\label{Main theorem 2}
Under the same assumption as in Theorem \ref{Main theorem 1}, one has that
\begin{align}
\lim\limits_{k,l,\frac{k}{l}\rightarrow \infty} B^{q,k,l}_{\leq \mu_{k,l}}(x)=1_{X(q)}(x)\big|det_{\omega_0}(\Theta)_x\big|.\notag
\end{align}
for some sequence $\mu_{k,l}$ tending to zero.
\end{theorem}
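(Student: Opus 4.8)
The plan is to upgrade the weak inequality of Theorem \ref{Main theorem 1} to an asymptotic equality by combining the local Bergman-kernel estimate already available with a lower bound that comes from a spectral-gap/model-computation argument, in the spirit of Berman's treatment of the compact case. The starting observation is that $B^{q,k,l}_{\leq\mu_{k,l}}\geq B^{q,k,l}_X$ for any $\mu_{k,l}\geq 0$, since the harmonic forms are exactly the $0$-eigenforms and hence are included in the space spanned by eigenforms with eigenvalue $\leq\mu_{k,l}$; so the content of the theorem is really the reverse inequality $\limsup B^{q,k,l}_{\leq\mu_{k,l}}(x)\leq 1_{X(q)}(x)|\det_{\omega_0}(\Theta)_x|$ together with a matching lower bound forcing equality. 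For the $\limsup$ direction I would rerun the localization argument used for Theorem \ref{Main theorem 1}, but now applied to the subspace $\cH^{q,k,l}_{\leq\mu_{k,l}}$ of low-energy eigenforms rather than just harmonic forms: one rescales coordinates near $x$ by the anisotropic scaling adapted to $\omega_{k,l}$ (contracting the $r$ ``$E$-directions'' at rate $k^{-1/2}$ and the remaining $n-r$ ``$F$-directions'' at rate $l^{-1/2}$), pulls back the metrics and the operator $\Box^{(q)}_{k,l}$, and shows that on shrinking balls the rescaled operators converge to the model $\debar$-Laplacian on $\C^n$ associated with the constant curvature form $\Theta_x$; the requirement $\mu_{k,l}\to 0$ slowly enough guarantees that a normal family/Montel argument produces a weak limit which is a genuine (square-integrable) harmonic form for the model operator, so that the model Bergman kernel $B^q_{x,\C^n}(0)$ dominates the limsup.

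For the lower bound I would construct, for each large $(k,l)$, enough approximately harmonic sections concentrated near $x$ to realize the model value $1_{X(q)}(x)|\det_{\omega_0}(\Theta)_x|$ at $x$. Concretely, when $x\in X(q)$ one takes the explicit Gaussian-type extremal section of the model operator on $\C^n$ (the one computing $S^q_{x,\C^n}(0)=B^q_{x,\C^n}(0)$), transplants it to $X$ via the local trivializations of $E^k\otimes F^l$ and a cutoff supported in an $\omega_{k,l}$-ball of radius $\to\infty$, and estimates $\|\debar(\text{cutoff}\cdot\text{model section})\|$; the $\debar$ of the cutoff is small because the section decays Gaussianly on the rescaled scale and the cutoff lives far out, while $\debar$ of the model section itself vanishes. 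Then a standard argument using the spectral theorem for $\Box^{(q)}_{k,l}$ (the section is $\varepsilon$-close to the span of eigenforms with eigenvalue $\leq\mu_{k,l}$ provided $\mu_{k,l}$ decays slower than $\|\debar\cdot\|^2$) shows $B^{q,k,l}_{\leq\mu_{k,l}}(x)\geq (1-\varepsilon)1_{X(q)}(x)|\det_{\omega_0}(\Theta)_x|$. Here compactness of $X$ enters twice: it is used to get a genuine discrete spectrum for $\Box^{(q)}_{k,l}$ and a clean spectral decomposition, and it is used to choose a single sequence $\mu_{k,l}\to 0$ that works uniformly over $x$, after checking that all the error terms in the localization are uniform in $x$ (which follows from the uniform comparability of $\omega$ with the local model metrics on a compact manifold).

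The main obstacle, as in Berman's compact argument, is the coupling of the rate $\mu_{k,l}\to 0$ to the quality of the localization: one must verify that the off-diagonal/boundary errors produced by the anisotropic rescaling and the cutoffs are $o(\mu_{k,l})$ in the appropriate $L^2$ sense, so that ``eigenvalue $\leq\mu_{k,l}$'' still traps the transplanted model sections and, conversely, so that the low-energy eigenforms on $X$ cannot do better than the model in the limit. This is delicate because the two scalings $k^{-1/2}$ and $l^{-1/2}$ are different and the regime $k/l\to\infty$ must be exploited to guarantee that curvature contributions from the $F$-directions are genuinely lower order where they should be; making the choice of $\mu_{k,l}$ explicit in terms of $k,l$ and the geometry, and checking it simultaneously controls both directions, is the technical heart of the proof. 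Once that bookkeeping is done, combining the $\limsup$ bound, the lower bound, and the trivial inequality $B^{q,k,l}_{\leq\mu_{k,l}}\geq B^{q,k,l}_X$ with Theorem \ref{Main theorem 1} yields the asserted limit.
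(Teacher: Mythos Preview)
Your proposal is correct and follows essentially the same route as the paper: an upper bound for $B^{q,k,l}_{\leq\mu_{k,l}}$ obtained by rerunning the localization argument of Theorem~\ref{Main theorem 1} for low-energy eigenforms (the paper's Theorem~\ref{proposition 5.1}), a lower bound obtained by transplanting the explicit Gaussian model section to $X$ via a cutoff and projecting onto the low-energy space through the spectral decomposition (the paper's Lemma~\ref{part2} and Theorem~\ref{pro 5.3}), and finally the choice $\mu_{k,l}=\sqrt{\delta_{k,l}}$ where $\delta_{k,l}$ bounds the $\Delta_{k,l}$-energy of the transplanted section uniformly in $x$. The technical point you flag---balancing the rate $\mu_{k,l}\to 0$ against the anisotropic localization errors so that $\delta_{k,l}/\mu_{k,l}\to 0$---is precisely the content of Lemma~\ref{part2}(iv) and the hypothesis of Theorem~\ref{pro 5.3}.
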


 It is worth  to mention that in \cite{Ber 02}, Berndtsson pointed out that the precise relation of Bouche's results (Theorem \ref{Bou}) and  his eigenvalue estimate is for the moment not clear.  Here Berndtsson's eigenvalue estimate is stated as follows.
 
 Assume $L$ is given a hermitian metric of semipositive curvature. Take $q\geq 1$, then if $0\leq \lambda\leq k$, we have
\begin{align}
h^{n,q}_{\leq \lambda}(X,L^k\otimes E)\leq C(1+\lambda)^qk^{n-q}.\notag
\end{align}
If $1\leq k\leq \lambda$, we have
\begin{align}
h^{n,q}_{\leq \lambda}(X,L^k\otimes E)\leq C\lambda^n,\notag
\end{align}
where $h_{\leq \lambda}(X,L^k\otimes E)$ is the dimension of the linear span of the eigenforms of the $\overline{\partial}$-Laplacian, whose eigenvalue is less than or equal to $\lambda$.

Now let $L$ be a semipositive holomorphic line bundle, and the maximal rank of the  Chern curvature form $c(L)$ is $r$ such that $r\neq 0$, then the  question is that if we can  improve the estimate in the above Berndtsson's result? Observed that the key ingredient in the  proof of Berndtsson's esimate  \cite{Ber 02} is a localization technique,  which is used  by Berman  \cite{Berman 04} to deduce the local holomorphic Morse inequalities.   For this consideration, the local version of Demailly-Bouche's holomorphic Morse inequalities is a good  start to investigate this question.

The structure of this paper is as follows. In Section 2, we introduce the Bergman kernel functions and the extremal functions and give an inequality which relates these two functions. In Section 3, we give the philosophy of localization including some basics of elliptic operator and the model extremal functions and Bergman kernel functions. In Section 4, we give a proof of the weak version of the Demailly-Bouche's holomorphic Morse inequalities. In Section 5, we give a proof of the strong version of the Demailly-Bouche's holomorphic Morse inequalities. We also raise a question on generalizing Demailly-Bouche's holomorphic Morse inequalities to CR setting.

\section{Localization  Procedure}\label{localization}
Throughout this paper, we assume that the condition in Theorem \ref{Bou} holds unless otherwise is specified.

Let $\omega_0$ be a hermitian metric of $X$, at each point $x\in X$, the orthogonal space of $T_xY$ with respect to $\omega_0$ defines a vector bundle of complex rank $n-r$ which was denoted by $NY$. Choose $\eta$ (resp. $\zeta$) a hermitian metric on $NY$ (resp. $TY$). Without loss of generality, we let $\omega_0=\eta+\zeta$. Define a hermitian metric $\omega_{k,l}=k\eta+l\zeta$ on $X$. The volume form on $X$ with respect to the Hermitian metric $\omega_{k,l}$ is
\begin{align*}
\frac{\omega_{k,l}^n}{n!}&=\sum_{i=0}^n\binom{i}{n}(k\eta)^i\wedge(l\zeta)^{n-i}\\
&=\binom{r}{n}(k\eta)^{r}\wedge(l\zeta)^{n-r}\\
&=\frac{k^rl^{n-r}}{r!(n-r)!}\eta^{r}\wedge \zeta^{n-r}\\
&=\frac{k^rl^{n-r}}{n!}\omega_0^n.
 \end{align*}
 Note that the fibers $NY$ and $TY$ are orthogonal with respect to the metric $\omega_{k,l}$.
 Around each point $x\in X$, we can find a local complex coordinate $\{z_1,\cdots, z_n\}$,  such that $Y=\{z_1=\cdots=z_r=0\}$ and
\begin{align}
\eta(z)&=\sqrt{-1}\sum_{i,j=1}^{r} h_{i,j}(z)dz_i\wedge \overline{dz_j}, ~~ h_{i,j}(0)=\delta_{i,j},\notag\\
\zeta(z)&=\sqrt{-1}\sum_{i,j=r+1}^nh_{i,j}(z)dz_i\wedge \overline{dz_j}, ~~ h_{i,j}(0)=\delta_{i,j}.\notag
\end{align}
By choosing suitable local frame of $E$ and $F$, up to orthonormal transformation to $(z_1,\cdots, z_r)$ and $(z_{r+1},\cdots, z_n)$ respectively, we can get that 
\begin{align}
\phi(z)&=\sum_{i=1}^r\lambda_{i,x}|z_i|^2+\sum_{1\leq j\leq n,r+1\leq  i\leq  n}(\lambda_{ij,x}z_i\overline{z}_j+\overline{\lambda_{ij,x}}\overline{z_i}z_j)+O(|z|^3),\notag\\
\psi(z)&=\sum_{i=r+1}^{n}\nu_{i,x}|z_i|^2+\sum_{1\leq j\leq n, 0\leq i\leq r}(\nu_{ij,x}z_i\overline{z}_j+\overline{\nu_{ij,x}}\overline{z_i}z_j)+O(|z|^3).\notag
\end{align}

By our assumption, the tangent space of $Y$ at any point $x\in X$ is contained in the kernel $c(E)$,  we have that $\lambda_{i,j}=0$ for $1\leq j\leq n,r+1\leq  i\leq  n$. It is easy to see that
 \begin{align}
 c(E)&=\frac{\sqrt{-1}}{2\pi}\sum_{i=1}^r\lambda_{x,i}dz_i\wedge\overline{dz_i}+O(|z|)\notag\\
 c(F)&=\frac{\sqrt{-1}}{2\pi}\sum_{i=r+1}^n\nu_{x,i}dz_i\wedge\overline{dz_i}+\mbox{mixed terms}+O(|z|).\notag
 \end{align}
Set
 \begin{align}
\Theta(z)&=c(E)|_{NY}+c(F)|_{TY}\notag\\
 &=\frac{\sqrt{-1}}{2\pi}\sum_{i=1}^r\lambda_{x,i}dz_i\wedge d\overline{z}_i+\frac{\sqrt{-1}}{2\pi}\sum_{i=r+1}^n\nu_{x,i}dz_i\wedge d\overline{z}_i+O(|z|),\notag\\
 \gamma_0(z)&=\sum_{i=1}^r\lambda_{x,i}|z_i|^2+\sum_{i=r+1}^n\nu_{x,i}|z_i|^2.\notag
 \end{align}

Set $r_{k,l}=\log\min\{\frac{k}{l}, l\}$. Then for any sequence of $k,l\in \mathbb{N}$ such that $k\rightarrow +\infty$, $l\rightarrow +\infty$ and $\frac{k}{l}\rightarrow +\infty$, one has  $r_{k,l}\rightarrow +\infty$.

 Let $z'=(z_1,\cdots,z_r)$ and $z''=(z_{r+1},\cdots, z_n)$, then $z=(z',z'')$.  Let $B_{r_{k,l}}:=\{z| |z'|<{r_{k,l}/\sqrt{k}}, |z''|<{r_{k,l}/\sqrt{l}}\}$ be identified with a small open subset in the coordinate chart of $x$.

 Define a scaling mapping $f^{(k,l)}$ from $B_{|z'|<r_{k,l}, |z''|<r_{k,l}}$ to $B_{r_{kl}}$ by
\begin{align}
f^{(k,l)}(z):=f(\frac{z'}{\sqrt{k}},\frac{z''}{\sqrt{l}}).\notag
\end{align}

For any given object $\alpha$ defined on the manifold $X$, denote by $\alpha^{(k,l)}$  the scaling of $\alpha$ restricted to $B_{r_{k,l}}$, i.e.
 \begin{align*}
 \alpha^{(k,l)}=(f^{(k,l)})^*(\alpha).
 \end{align*}

 By direct computation, we have
\begin{align}
(k\phi)^{(k,l)}(z)&=\sum_{i=1}^r\lambda_{x,i}|z_i|^2+kO_1(k,l)(|z|^3),\notag\\
(l\psi)^{(k,l)}(z)&=\sum_{1\leq j\leq n, 0\leq i\leq r}\sqrt{\frac{l}{k}}Re(\nu_{ij,x}z_i\overline{z}_j)+\sum_{i=r+1}^n\nu_{x,i}|z_i|^2+lO_2(k,l)(|z|^3),\notag
\end{align}
where $Re(\nu_{ij,x}z_i\overline{z}_j)$ is the real part of $\nu_{ij,x}z_i\overline{z}_j$.

 For later use, we analyse the two terms $O_1(k,l)$ and $O_2(k,l)$ a little bit more. Since $c(E)$ contains $TY$ as its kernel, by computing $\partial\overline{\partial}\phi$, we know that the term $O(|z|^3)$ in the expansion of $\phi$ should contain at least two coordinate functions from $\{z_i\}_{1\leq i\leq r}$. This implies that  $O_1(k,l)\leq \frac{1}{k\sqrt{ l}}$. In general $O_2(k,l)\leq \frac{1}{(\sqrt{l})^3}$.  From the choice of $r_{k,l}$, easy computations show that
\begin{align}
&\sup_{|z'|<r_{k,l}, |z''|<r_{k,l}}\big|\partial^{\alpha}((k\phi)^{(k,l)}+(l\psi)^{(k,l)}-\gamma_0)(z)\big|\rightarrow 0,\label{metric 1}\\
&\sup_{|z'|<r_{k,l}, |z''|<r_{k,l}}\big|\partial^{\alpha}(\omega_{k,l}^{(k,l)}-\sum_{i=1}^ndz_i\wedge d\overline{z}_i)\big|\rightarrow 0,\label{metric 2}
\end{align}
when $k,l\rightarrow +\infty$ and $\frac{k}{l}\rightarrow +\infty$.

Moreover, $B_{|z'|<r_{k,l}, |z''|<r_{k,l}}$ exhausts $\mathbb{C}^n$, when $k,l\rightarrow +\infty$.

We also have the following fact
\begin{align}\label{Norm invariance}
f^*_{k,l}|\alpha_{k,l}|^2=|\alpha^{(k,l)}|^2
\end{align}
where the first norm is taken with respect to the metric $\omega_{k,l}$ and the fiber metric $k\phi$ and $l\psi$, and the second norm is taken with respect to the scaled metric  $\omega_{k,l}^{(k,l)}$ and the scaled fiber metric $(k\phi)^{(k,l)}$ and $(l\psi)^{(k,l)}$.

Denote by $\Delta_{k,l}$ the $\overline{\partial}$-Laplacian defined with respect to the metric $\omega_{k,l}$ and the fiber metric $k\phi$ and $l\psi$, by $\Delta_{\overline{\partial}}^{(k,l)}$ the $\overline{\partial}$-Laplacian defined  with respect to the scaled metric $\omega_{k,l}^{(k,l)}$ and  the scaled fiber metric $(k\phi)^{(k,l)}$ and $(l\psi)^{(k,l)}$.

 As stated in \cite[Lemma 5.2]{Berman 05}, the Laplacian is naturally defined with respect to any given metric, it is invariant under pull-back, we thus have the following identity
\begin{align}
\Delta_{\overline{\partial}}^{(k,l)}\alpha^{(k,l)}=(\Delta_{k,l}\alpha)^{(k,l)}.\notag
\end{align}

Moreover, from (\ref{metric 1}), (\ref{metric 2}) and similar argument  as \cite[(5.3)]{Berman 05}, one can obtain that 
\begin{align}\label{Laplacian expansion}
\Delta_{\overline{\partial}}^{(k,l)}=\Delta_{\overline{\partial},\gamma_0}+\epsilon_{k,l}\mathcal{D}_{k,l},
\end{align}
where $\Delta_{\overline{\partial},\gamma_0}$ is the $\overline{\partial}$-Laplacian defined with respect to the Euclidean metric on $\mathbb{C}^n$ and the metric $e^{-\gamma_0}$ of the trivial line bundle over $\mathbb{C}^n$, and $\mathcal{D}_{k,l}$ is a second order partial differential operator with bounded variable coefficients on the scaled ball $B_{|z'|<r_{k,l}, |z''|<r_{k,l}}$ and  $\epsilon_{k,l}$ is a sequence tending to zero with $k, l, \frac{k}{l}$ tending  to $+\infty$.

Observed  that for any form $\alpha$ with values in $E^k\otimes F^l$, one can get that 
\begin{align}\label{norm localization}
\|\alpha\|^2_{B_{r_{k,l}}}\sim \|\alpha^{(k,l)}\|^2_{\gamma_0,|z'|<r_{k,l}, |z''|<r_{k,l}}.
\end{align}

Form standard techniques for elliptic operators,  and similarly with \cite[Lemma 3.1]{Berman 04}, the following lemma holds.
\begin{lemma}[{c.f. \cite[Lemma 3.1]{Berman 04}}]\label{lemma 3.1}
For each $k, l$, suppose that $\beta^{(k,l)}$ is a smooth form on $B_{|z'|<r_{k,l}, |z''|<r_{k,l}}$ such that $\Delta^{(k,l)}_{\overline{\partial}}\beta^{(k,l)}=0$. Identify $\beta^{(k,l)}$ with a form in $L^2_{\gamma_0}(\mathbb{C}^n)$ by extending with zero. Then there is constant $C$ independent of $k,l$ such that
\begin{align}
\sup_{z\in B_1}|\beta^{(k,l)}(z)|_{\gamma_0}^2\leq C\|\beta^{(k,l)}\|^2_{\gamma_0,B_2}.\notag
\end{align}
Moreover, if the sequence of norms $\|\alpha^{(k,l)}\|_{\gamma_0,C^n}^2$ is bounded, then there is a subsequence of $\beta^{(k,l)}$  which converges uniformly with all derivatives on any ball in $\mathbb{C}^n$ to a smooth form $\beta$, where $\beta$ is in $L^2_{\gamma_0}(\mathbb{C}^n)$.
\end{lemma}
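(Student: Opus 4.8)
The statement is Lemma 3.1, a standard elliptic estimate adapted to the scaled setting. The plan is to follow the proof of \cite[Lemma 3.1]{Berman 04} and the parabolic/elliptic technique of \cite{Berman 05}, tracking the uniformity of constants in $k,l$.

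\textbf{Step 1: Uniform interior elliptic estimate.} By \eqref{Laplacian expansion} we have $\Delta^{(k,l)}_{\overline{\partial}}=\Delta_{\overline{\partial},\gamma_0}+\epsilon_{k,l}\mathcal{D}_{k,l}$, where $\mathcal{D}_{k,l}$ is second order with coefficients bounded uniformly in $k,l$ on the scaled ball, and $\epsilon_{k,l}\to 0$. Hence $\Delta^{(k,l)}_{\overline{\partial}}$ is a family of uniformly elliptic operators with uniformly bounded (smooth) coefficients on a fixed neighbourhood of $B_2$ once $k,l$ are large. Since $\Delta^{(k,l)}_{\overline{\partial}}\beta^{(k,l)}=0$, the classical interior Schauder/$L^2$–Sobolev elliptic estimates give, for any integer $m$ and concentric balls $B_1\subset B_{3/2}\subset B_2$,
\begin{align}
\|\beta^{(k,l)}\|_{H^{m+2}(B_1)}\leq C_m\|\beta^{(k,l)}\|_{L^2(B_{3/2})},\notag
\end{align}
with $C_m$ independent of $k,l$ because it depends only on the ellipticity constant and on bounds for finitely many derivatives of the coefficients, both of which are uniform by \eqref{Laplacian expansion}. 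Taking $m$ large enough and applying the Sobolev embedding $H^{m+2}(B_1)\hookrightarrow C^0(B_1)$ yields $\sup_{B_1}|\beta^{(k,l)}|^2_{\gamma_0}\leq C\|\beta^{(k,l)}\|^2_{\gamma_0,B_{3/2}}\leq C\|\beta^{(k,l)}\|^2_{\gamma_0,B_2}$, which is the first assertion. One must also check that the fibre metric $e^{-\gamma_0}$ contributes only smooth bounded weights on the fixed ball $B_2$, so absorbing it into the constant is harmless; this is where \eqref{norm localization} and \eqref{Norm invariance} are invoked to pass between the intrinsic and scaled norms.

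\textbf{Step 2: Compactness / diagonal extraction.} Assume $\|\alpha^{(k,l)}\|^2_{\gamma_0,\mathbb{C}^n}$ is bounded (the lemma's hypothesis — presumably $\beta^{(k,l)}$ in the second part should be read with the same boundedness, as in Berman). Fix a ball $B_R$. For $k,l$ large the ball $B_{|z'|<r_{k,l},|z''|<r_{k,l}}$ contains $B_{2R}$, so Step 1 applied on $B_{2R}$ (rescaled) gives uniform $C^m(B_R)$ bounds on $\beta^{(k,l)}$ for every $m$. By Arzelà–Ascoli, a subsequence converges in $C^m(B_R)$ for all $m$; a diagonal argument over $R\to\infty$ and a nested sequence of exhausting balls produces a single subsequence converging in $C^\infty_{\mathrm{loc}}(\mathbb{C}^n)$ to a smooth limit $\beta$. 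Uniform convergence of all derivatives on compact sets, together with $\Delta^{(k,l)}_{\overline{\partial}}\to\Delta_{\overline{\partial},\gamma_0}$ (coefficientwise, from \eqref{Laplacian expansion} since $\epsilon_{k,l}\to0$ and $\mathcal{D}_{k,l}$ has bounded coefficients), forces $\Delta_{\overline{\partial},\gamma_0}\beta=0$. Finally, Fatou's lemma applied to the convergence on each $B_R$ gives $\|\beta\|^2_{\gamma_0,\mathbb{C}^n}\leq\liminf\|\beta^{(k,l)}\|^2_{\gamma_0,\mathbb{C}^n}<\infty$, so $\beta\in L^2_{\gamma_0}(\mathbb{C}^n)$.

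\textbf{Main obstacle.} The only genuinely delicate point is the \emph{$k,l$-uniformity} of the elliptic constant $C_m$ in Step 1. This rests entirely on \eqref{Laplacian expansion}: one needs that $\mathcal{D}_{k,l}$ has coefficients bounded, together with a fixed number of their derivatives, uniformly in $k,l$ on the scaled balls — which in turn traces back to the estimates \eqref{metric 1}, \eqref{metric 2} on $O_1(k,l)$, $O_2(k,l)$ and the choice $r_{k,l}=\log\min\{k/l,l\}$. Granting those (all established earlier in the excerpt), the proof is the standard combination of interior elliptic regularity, Sobolev embedding, and a diagonal Arzelà–Ascoli argument, exactly parallel to \cite[Lemma 3.1]{Berman 04} and \cite[\S5]{Berman 05}.
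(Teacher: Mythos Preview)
Your proposal is correct and follows exactly the approach the paper intends: the paper does not give its own proof but simply refers to standard elliptic techniques and \cite[Lemma 3.1]{Berman 04}, and your argument (uniform interior elliptic estimates from \eqref{Laplacian expansion}, Sobolev embedding, then Arzel\`a--Ascoli with a diagonal extraction) is precisely that standard argument. You have also correctly isolated the only nontrivial point, namely the $k,l$-uniformity of the elliptic constants, and traced it back to \eqref{metric 1}, \eqref{metric 2}, and \eqref{Laplacian expansion}; note that the conclusion $\Delta_{\overline{\partial},\gamma_0}\beta=0$ you derive in Step~2 is not part of the lemma's statement but is exactly what the paper uses immediately afterward in the proof of Theorem~\ref{Main theorem 1}.
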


Consider a model case, i.e. a trivial line bundle over $\mathbb{C}^n$ equipped with Hermitian metric $\gamma_0$. Simple computations from calculus imply that  
 \begin{align}\label{model kernel}
 B^q_{x,\mathbb{C}^n}(0)=S^q_{x,\mathbb{C}^n}(0)=1_{X(q)}(x)\big|det_{\omega_0}(\Theta)_x\big|.
 \end{align}
Moreover, suppose that the first $q$ eigenvalues of the quadratic form $\gamma_0$ are negative and the rest are positive (which corresponds to the case when $x$ is in the open subset $X(q)$). Then
\begin{align}\label{extremal function}
S^q_{I,x,\mathbb{C}^n}(0)=0,
\end{align}
unless $I=(1,2,\cdots,q)$.  For the detailed  proof, we refer to \cite[Proposition 4.3]{Berman 04}.
\section{Local version of Demailly-Bouche's weak holomorphic Morse inequalities}
In this section, we are going to prove the following local version of Demailly-Bouche's weak holomorphic Morse inequalities.

\begin{theorem}[=Theorem \ref{Main theorem 1}]
Let $(X,\omega_{k,l})$ be a hermitian manifold. Let $E, F$ satisfies the assumption in Theorem \ref{Bou}. Then the Bergman kernel function $B^{q,k,l}_X$ and the extremal function $S^{q,k,l}_X$ of the space of the global $\overline{\partial}$-harmonic $(0,q)$-forms with values in $E^k\otimes F^l$, satisfy
\begin{align}
\limsup_{k,l;\frac{k}{l}\rightarrow \infty}B^{q,k,l}_X(x)\leq B^q_{x,\mathbb{C}^n}(0),~~~  \limsup_{k,l,\frac{k}{l}\rightarrow \infty}S^{q,k,l}_X(x)\leq S^q_{x,\mathbb{C}^n}(0),\notag
\end{align}
where
\begin{align}
B^q_{x,\mathbb{C}^n}(0)=S^q_{x,\mathbb{C}^n}(0)=1_{X(q)}(x)\big|det_{\omega_0}(\Theta)_x\big|,\notag
\end{align}
and $\lim\limits_{k,l,\frac{k}{l}\rightarrow \infty}B^{q,k,l}_X(x)=\lim\limits_{k,l,\frac{k}{l}\rightarrow\infty}S^{q,k,l}_X(x)$ if one of the limits exists.
\end{theorem}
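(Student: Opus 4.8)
The plan is to follow Berman's localization strategy, adapted to the bi-parameter scaling $\omega_{k,l}=k\eta+l\zeta$ set up in Section~\ref{localization}. The starting point is the estimate \eqref{lemma 2.1}, which reduces everything to controlling the extremal functions $S^{q,k,l}_{X,I}(x)$ and $S^{q,k,l}_X(x)$. First I would fix a point $x\in X$ and, for each $(k,l)$, choose an extremal form $\alpha_{k,l}\in\mathcal{H}^{0,q}(X,E^k\otimes F^l)$ realizing (or nearly realizing) the supremum in the definition of $S^{q,k,l}_X(x)$, normalized so that $\|\alpha_{k,l}\|_X^2=1$. Pulling back under the scaling map $f^{(k,l)}$ and using the norm-invariance \eqref{Norm invariance} together with \eqref{norm localization}, the scaled forms $\alpha^{(k,l)}$ have $L^2_{\gamma_0}$-norm $\lesssim 1$ on the exhausting balls $B_{|z'|<r_{k,l},\,|z''|<r_{k,l}}$, and by the Laplacian identity they satisfy $\Delta^{(k,l)}_{\overline\partial}\alpha^{(k,l)}=0$.

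Next I would invoke Lemma~\ref{lemma 3.1}: the uniform sup-bound on $B_1$ in terms of the $L^2_{\gamma_0}$-norm on $B_2$ gives a pointwise bound on $|\alpha^{(k,l)}(0)|^2_{\gamma_0}$, and the compactness statement in that lemma produces a subsequence of $\alpha^{(k,l)}$ converging in $C^\infty_{\mathrm{loc}}$ to a limit form $\alpha$ on $\mathbb{C}^n$ with $\alpha\in L^2_{\gamma_0}(\mathbb{C}^n)$, $\|\alpha\|^2_{\gamma_0,\mathbb{C}^n}\le 1$. The expansion \eqref{Laplacian expansion}, with $\epsilon_{k,l}\to 0$ and $\mathcal{D}_{k,l}$ having uniformly bounded coefficients, forces $\Delta_{\overline\partial,\gamma_0}\alpha=0$, i.e. $\alpha$ lies in the model harmonic space on $(\mathbb{C}^n,\gamma_0)$. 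Therefore $|\alpha(0)|^2_{\gamma_0}\le S^q_{x,\mathbb{C}^n}(0)$, and passing to the limit through the convergence gives $\limsup_{k,l,k/l\to\infty}S^{q,k,l}_X(x)\le S^q_{x,\mathbb{C}^n}(0)$; feeding this into the right-hand inequality of \eqref{lemma 2.1} applied component-wise (together with \eqref{extremal function}, which kills all components except $I=(1,\dots,q)$ on $X(q)$ and all components off $X(q)$) upgrades it to $\limsup B^{q,k,l}_X(x)\le B^q_{x,\mathbb{C}^n}(0)$. Combined with the left-hand inequality $S^{q,k,l}_X\le B^{q,k,l}_X$ and \eqref{model kernel}, which identifies $B^q_{x,\mathbb{C}^n}(0)=S^q_{x,\mathbb{C}^n}(0)=1_{X(q)}(x)|\det_{\omega_0}(\Theta)_x|$, one gets the squeeze showing that if either limit exists it equals the other.

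I expect the main obstacle to be the verification that the bi-parameter scaling genuinely produces the clean model Laplacian $\Delta_{\overline\partial,\gamma_0}$ with an error $\epsilon_{k,l}\to 0$ — this is where the foliation hypothesis on $\ker c(E)$ is essential. Concretely, one must check the estimates \eqref{metric 1} and \eqref{metric 2}: the cubic error $O_1(k,l)$ in $(k\phi)^{(k,l)}$ must decay because the $O(|z|^3)$ tail of $\phi$ contains at least two of the $z_1,\dots,z_r$ variables (a consequence of $TY\subset\ker c(E)$, obtained by differentiating $\partial\overline\partial\phi$), giving $O_1(k,l)\le \frac{1}{k\sqrt l}$, while the mixed terms in $(l\psi)^{(k,l)}$ carry the factor $\sqrt{l/k}\to 0$; the choice $r_{k,l}=\log\min\{k/l,\,l\}$ is exactly calibrated so that $r_{k,l}\to\infty$ (balls exhaust $\mathbb{C}^n$) yet $k\,O_1(k,l)\,r_{k,l}^3\to 0$ and $l\,O_2(k,l)\,r_{k,l}^3\to 0$. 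A secondary technical point is ensuring the $\overline\partial$-harmonicity is preserved under pull-back and that the ellipticity constants in Lemma~\ref{lemma 3.1} are uniform in $(k,l)$ despite the varying domains; both follow from the fact that the scaled metrics $\omega_{k,l}^{(k,l)}$ converge in $C^\infty$ to the Euclidean metric on every fixed ball, so for $(k,l)$ large the operators are uniformly elliptic there. Once these localization estimates are in place, the argument is the standard Berman normal-families machinery.
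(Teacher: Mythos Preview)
Your proposal is correct and follows essentially the same approach as the paper's own proof: choose unit-norm extremal forms $\alpha_{k,l}$, scale them and use Lemma~\ref{lemma 3.1} plus the Laplacian expansion~\eqref{Laplacian expansion} to extract a $\Delta_{\overline\partial,\gamma_0}$-harmonic limit on the model $(\mathbb{C}^n,\gamma_0)$, deduce the bound on $S^{q,k,l}_X$, then repeat component-wise and invoke~\eqref{extremal function} together with~\eqref{lemma 2.1} to obtain the bound on $B^{q,k,l}_X$. The additional discussion you give of the foliation hypothesis and the error terms~\eqref{metric 1}--\eqref{metric 2} is the content of Section~\ref{localization} itself, so in the proof proper you may simply cite those estimates.
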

\begin{proof}
Firstly, we will prove that
\begin{align}
\limsup_{k,l,\frac{k}{l}\rightarrow \infty}S^{q,k,l}_X(x)\leq S^q_{x,\mathbb{C}^n}(0).\notag
\end{align}

By definition, there is a  sequenence $\alpha_{k,l}\in H^{0,q}(X,E^k\otimes F^l)$, such that
\begin{align}
\|\alpha_{k,l}\|&=1, \notag\\
\limsup_{k,l,\frac{k}{l}\rightarrow \infty}S^{q,k,l}_X(x)&=\limsup_{k,l,\frac{k}{l}\rightarrow \infty}|\alpha_{k,l}(x)|^2.\notag
\end{align}

Now consider the sequence $\beta^{(k,l)}$, which equals to $\alpha^{(k,l)}$ on $B_{|z'|<r_{k,l}, |z''|<r_{k,l}}$ and   identified with a form in $L^2_{\gamma_0}(\mathbb{C}^n)$, by extending with zero.

Note that
\begin{align*}
\limsup_{k,l,\frac{k}{l}\rightarrow\infty}\|\beta^{(k,l)}\|_{\gamma_0}^2&=\limsup_{k,l,\frac{k}{l}\rightarrow \infty}\|\alpha^{(k,l)}\|^2_{\gamma_0,B_{|z'|<r_{k,l}, |z''|<r_{k,l}}}\\
&\sim \limsup_{k,l,\frac{k}{l}\rightarrow \infty}\|\alpha_{(k,l)}\|^2_{B_{r_{k,l}}}\\
&\leq \limsup_{k,l,\frac{k}{l}\rightarrow \infty}\|\alpha_{k,l}\|_X^2=1,
\end{align*}
where the second estimate follows from  (\ref{norm localization}). 

From  Lemma \ref{lemma 3.1}, there is a subsequence $\beta^{(k_j,l_j)}$ that converges uniformly with all derivatives to $\beta$ on any ball in $\mathbb{C}^n$, where $\beta$ is smooth and $\|\beta\|^2_{\gamma_0,\mathbb{C}^n}\leq 1$. Hence we have $\Delta_{\overline{\partial},\gamma_0}\beta=0$, which follows from (\ref{Laplacian expansion}), implying that 
\begin{align}
\limsup_{k,l,\frac{k}{l}\rightarrow \infty}S^{q,k,l}_X(x)=\lim_{j\rightarrow \infty}|\beta^{(k_j,l_j)}(0)|^2=|\beta(0)|^2\leq \frac{|\beta(0)|^2}{\|\beta\|^2_{\gamma_0,\mathbb{C}^n}}\leq S^{q}_{x,\mathbb{C}^n}(0),\notag
\end{align}
where the first equality follows from (\ref{Norm invariance}).

Moreover, from
\begin{align}
B^q_{x,\mathbb{C}^n}(0)=S^q_{x,\mathbb{C}^n}(0)=1_{X(q)}(x)\big|det_{\omega_0}(\Theta)_x\big|,\notag
\end{align}
and Lemma \ref{lemma 2.1}, we can see that $\lim\limits_{k,l,\frac{k}{l}\rightarrow \infty}B^{q,k,l}_X(x)=0$ outside $X(q)$.

Next if $x\in X(q)$, we may assume $\lambda_1,\cdots, \lambda_q$ are the negative eigenvalues. By (\ref{extremal function}), we have that $\beta^I=0$, if $I\neq (1,\cdots, q)$. We obtain that if $I\neq (1,\cdots, q)$,
\begin{align}
\lim_{k_j,l_j,\frac{k_j}{l_j}\rightarrow \infty}S^{q,k_j,l_j}_I(0)=\lim_{k_j,l_j,\frac{k_j}{l_j}\rightarrow \infty}|\alpha^I_{k_j,l_j}(0)|^2=|\beta^I(0)|^2=0.\notag
\end{align}

This  proves that
\begin{align}
\lim_{k,l,\frac{k}{l}\rightarrow \infty}  S^{q,k,l}_I(0)=0\notag
\end{align}
if $I\neq (1,\cdots, q)$. 

Finally, from Lemma \ref{lemma 2.1}, we deduce that
\begin{align}
\lim_{k,l,\frac{k}{l}\rightarrow \infty}B^{q,k,l}_X(x)\leq 0+\cdots+0+S^q_{x,\mathbb{C}^n}(0)=B^q_{x,\mathbb{C}^n}(0).\notag
\end{align}
 The proof of this theorem is thus completed.
\end{proof}
\section{The Weak Version of Demailly-Bouche's Holomorphic Morse Inequalities}
In this section, by using local version of Demailly-Bouche's weak holomorphic Morse inequalities, we give a simple proof of  Demailly-Bouche's weak holomorphic Morse inequalities.
\begin{theorem}[c.f.\cite{Bouche 91}]\label{weak holomorphic morse inequalities}
Suppose $X$ is compact. $E$, $F$ are two holomorphic line bundles which satisfy the assumption in Theorem \ref{Bou}, then for any $q=0,1,\cdots, n$, when $k,l,\frac{k}{l}$ tends to infinity, we have
\begin{align}
&\dim_\mathbb{C} H^q(X,E^k\otimes F^l)\notag\\
&\leq \frac{k^r}{r!}\frac{l^{n-r}}{(n-r)!}\int_{X(q)}(-1)^q\left(\frac{i}{2\pi}c(E)\right)^r\wedge \left(\frac{i}{2\pi}c(F)\right)^{n-r}+ o(k^rl^{n-r}).\notag
\end{align}
\end{theorem}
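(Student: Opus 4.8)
The plan is to derive the weak Morse inequality from the local estimate of Theorem \ref{Main theorem 1} by integrating the Bergman kernel function over $X$ and applying the dominated convergence theorem. First I would recall that since $X$ is compact, Hodge theory identifies $H^q(X,E^k\otimes F^l)$ with the space $\mathcal{H}^{0,q}(X,E^k\otimes F^l)$ of $\overline{\partial}$-harmonic $(0,q)$-forms, and that for an orthonormal basis $\{\Psi_i\}_{i=1}^N$ of this space one has the pointwise identity $B^{q,k,l}_X(x)=\sum_{i=1}^N|\Psi_i(x)|^2$. Integrating this identity against the volume form $\omega_{k,l}^n/n!$ and using that each $\Psi_i$ has unit $L^2$-norm gives
\begin{align}
\dim_{\mathbb{C}} H^q(X,E^k\otimes F^l)=N=\int_X B^{q,k,l}_X(x)\,\frac{\omega_{k,l}^n}{n!}.\notag
\end{align}
Now I would substitute the volume-form computation from Section \ref{localization}, namely $\omega_{k,l}^n/n!=(k^rl^{n-r}/n!)\,\omega_0^n$, so that
\begin{align}
\dim_{\mathbb{C}} H^q(X,E^k\otimes F^l)=\frac{k^rl^{n-r}}{n!}\int_X B^{q,k,l}_X(x)\,\omega_0^n.\notag
\end{align}

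The core of the argument is then to pass to the limit inside the integral. By Theorem \ref{Main theorem 1} we have the pointwise bound $\limsup_{k,l,k/l\to\infty} B^{q,k,l}_X(x)\leq 1_{X(q)}(x)|\det_{\omega_0}(\Theta)_x|$ for every $x\in X$. To control the integral I would invoke a uniform upper bound on $B^{q,k,l}_X$ (the elliptic estimate of Lemma \ref{lemma 3.1}, pulled back, gives a constant $C$ independent of $k,l$ with $B^{q,k,l}_X(x)\le C$, using compactness of $X$ to get uniformity of the local constants over a finite cover), so that the family $\{B^{q,k,l}_X\}$ is dominated by the constant $C$ on the finite-measure space $(X,\omega_0^n)$. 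Fatou's lemma (applied to $C-B^{q,k,l}_X\ge 0$), equivalently reverse Fatou, then yields
\begin{align}
\limsup_{k,l,k/l\to\infty}\int_X B^{q,k,l}_X(x)\,\omega_0^n\leq \int_X \limsup_{k,l,k/l\to\infty}B^{q,k,l}_X(x)\,\omega_0^n\leq \int_{X(q)}\big|\det_{\omega_0}(\Theta)_x\big|\,\omega_0^n.\notag
\end{align}

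Finally I would rewrite the right-hand side in curvature terms. On $X(q)$ the form $\Theta$ is nondegenerate with exactly $q$ negative eigenvalues, so $(-1)^q\Theta^n/n!=|\det_{\omega_0}(\Theta)|\,\omega_0^n/n!$ pointwise; moreover $\Theta$ is the direct sum of $c(E)$ restricted to $NY$ (rank $r$) and $c(F)$ restricted to $TY$ (rank $n-r$), so $\Theta^n = \binom{n}{r}\,(c(E)|_{NY})^r\wedge(c(F)|_{TY})^{n-r}$, which under the wedge with the full forms equals the appropriate multiple of $(\tfrac{i}{2\pi}c(E))^r\wedge(\tfrac{i}{2\pi}c(F))^{n-r}$ because the extra directions in each factor annihilate against the complementary ones. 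Tracking the combinatorial constants converts $\frac{1}{n!}\int_{X(q)}(-1)^q\Theta^n$ into $\frac{1}{r!(n-r)!}\int_{X(q)}(-1)^q(c(E))^r\wedge(c(F))^{n-r}$, which combined with the $k^rl^{n-r}/n!$ prefactor above and the observation that $\limsup \dim_{\mathbb C}H^q\le \limsup(k^rl^{n-r}/n!)\int_X B^{q,k,l}_X\omega_0^n$ gives exactly the claimed inequality with error $o(k^rl^{n-r})$. The main obstacle I anticipate is justifying the uniform domination of $B^{q,k,l}_X$ needed for the reverse Fatou step: one must verify that the constant in Lemma \ref{lemma 3.1} can be taken uniform over a finite coordinate cover of the compact $X$ and that the local scaling constructions glue, i.e. that the estimates \eqref{metric 1}, \eqref{metric 2} hold uniformly in $x$; the rest is bookkeeping with the combinatorics of $\Theta^n$ and the standard Hodge-theoretic identification.
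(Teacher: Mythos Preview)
Your proposal is correct and follows essentially the same route as the paper: integrate the Bergman kernel identity $\dim H^q=\int_X B^{q,k,l}_X\,\omega_{k,l}^n/n!$, use compactness together with the G{\aa}rding/elliptic estimate underlying Lemma \ref{lemma 3.1} to get a uniform $L^1$ bound on $B^{q,k,l}_X$, then apply reverse Fatou and the pointwise local inequality of Theorem \ref{Main theorem 1}. The only cosmetic difference is that the paper bounds $S^{q,k,l}_X$ first and then passes to $B^{q,k,l}_X$ via \eqref{lemma 2.1}, whereas you bound $B^{q,k,l}_X$ directly; and the paper leaves the curvature-form bookkeeping implicit while you spell it out.
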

\begin{proof}
We first show that the sequence $S^{q,k,l}_X(x)$ is dominated by a constant if $X$ is compact. 

Since $X$ is compact, it is sufficient to prove this for a sufficiently small neighborhood of a fixed point $x_0$. 

For a given form $\alpha_{k,l}\in \mathcal{H}^{0,q}(X,E^k\otimes F^l)$, we consider its restriction to a polydisk $B_{\frac{1}{\sqrt{k}},\frac{1}{\sqrt{l}}}$ centered at $x_0$.

Using G{\aa}rding's inequality, we see that there is a constant $C(x_0)$ depending continuously on $x_0$, such that
\begin{align}
|\alpha_{k,l}(x_0)|^2\sim|\alpha^{(k,l)}(0)|_{\gamma_0}\leq C(x_0)\|\alpha^{(k,l)}\|^2_{\gamma_0,B_{1,1}}\notag
\end{align}
for $k$, $l$ larger than $k_0(x_0)$ and $l_0(x_0)$.

Moreover, we may assume that the same $k_0(x_0)$ and $l_0(x_0)$ work for all $x$ sufficiently near $x_0$.

By using (\ref{norm localization}), we have that
\begin{align}
|\alpha_{k,l}(x_0)|^2\leq 2 C(x_0)\|\alpha_{k,l}\|^2_{X}\notag
\end{align}
for $k$ and $l$ larger than $k_1(x_0)$ and $l_1(x_0)$  and the same $k_1$, $l_1$ work for all $x$ sufficiently near $x_0$. This proves that $S^{q,k,l}_X(x)$ is dominated by a constant if $X$ is compact.

By (\ref{lemma 2.1}) and the fact that $X$ has finite volume, we see that the sequence $B^{q,k,l}_X(x)$ is  dominated by a $L^1$ function.

 Finally, we have that 
\begin{align}
\limsup_{k,l,\frac{k}{l}\rightarrow \infty}\dim_\mathbb{C} H^{0,q}(X,E^k\otimes F^l)=\limsup_{k,l,\frac{k}{l}\rightarrow \infty}\int_XB^{q,k,l}_X\frac{(\omega_{k,l})^n}{n!}\notag
\end{align}
and then, Fatou's lemma shows that
\begin{align*}
\int_X\limsup_{k,l,\frac{k}{l}\rightarrow \infty}&B^{q,k,l}_X\frac{(\omega_{k,l})^n}{n!}
\leq  \int_X 1_{X(q)}(x)\big|det_{\omega_0}(\Theta)_x\big|\frac{(\omega_{k,l})^n}{n!}\\
&=\frac{k^rl^{n-r}}{r!(n-r)!}\int_{X(q)}(-1)^q\left(\frac{i}{2\pi}c(E)\right)^r\wedge \left(\frac{i}{2\pi}c(F)\right)^{n-r}.
\end{align*}
\end{proof}

\section{The Strong Version of Demailly-Bouche's Holomorphic Morse Inequalities}

Let $\mathcal{H}^q_{\leq \mu_{k,l}}(X,E^k\otimes F^l)$ denote space of the linear span of  the eigenforms of $\Delta_{\overline{\partial}}$ whose eigenvalues are bounded by $\mu_{k,l}$ and $B^{q,k,l}_{\leq \mu_{k,l}}$ the Bergman kernel function of the space $\mathcal{H}^q_{\leq \mu_{k,l}}(X,E^k\otimes F^l)$. In this section, we will prove the following asymptotic equality
\begin{align}
\lim_{k,l,\frac{k}{l}\rightarrow \infty}B^{q,k,l}_{\leq \mu_{k,l}}(x)=1_{X(q)}(x)\big|det_{\omega_0}(\Theta)_x\big|\notag
\end{align}
where $\mu_{k,l}$ is a properly chosen sequence.
\begin{theorem}\label{proposition 5.1}
Assume that $\mu_{k,l}\rightarrow 0$, then the following estimate holds:
\begin{align}
\lim_{k,l,\frac{k}{l}\rightarrow \infty}B^{q,k,l}_{\leq \mu_{k,l}}(x)\leq1_{X(q)}(x)\big|det_{\omega_0}(\Theta)_x\big|.\notag
\end{align}

\end{theorem}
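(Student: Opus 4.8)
The plan is to re-run the localization/scaling argument of Theorem \ref{Main theorem 1} verbatim, the only genuinely new ingredient being that the members of $\mathcal{H}^q_{\leq\mu_{k,l}}(X,E^k\otimes F^l)$ are not harmonic but merely lie in the span of eigenforms with eigenvalue $\leq\mu_{k,l}$; the hypothesis $\mu_{k,l}\to 0$ is exactly what forces the scaled limits to be model $\gamma_0$-harmonic. Throughout, $X$ is compact (as in Theorem \ref{Main theorem 2}), so $\Delta_{\overline{\partial}}=\Delta_{k,l}$ has discrete spectrum, $\mathcal{H}^q_{\leq\mu_{k,l}}$ is finite dimensional, the extremal function $S^{q,k,l}_{\leq\mu_{k,l},X}(x)=\sup|\alpha(x)|^2/\|\alpha\|^2_X$ and its components $S^{q,k,l}_{\leq\mu_{k,l},I}(x)$ (supremum over $\alpha\in\mathcal{H}^q_{\leq\mu_{k,l}}$) are attained, and the purely linear-algebraic inequality (\ref{lemma 2.1}) applies to this space, giving $B^{q,k,l}_{\leq\mu_{k,l}}(x)\leq\sum_I S^{q,k,l}_{\leq\mu_{k,l},I}(x)$. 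Hence it suffices to estimate the component extremal functions.

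The key step is an inhomogeneous version of Lemma \ref{lemma 3.1}. Take $\alpha_{k,l}\in\mathcal{H}^q_{\leq\mu_{k,l}}$ with $\|\alpha_{k,l}\|_X=1$; writing $\alpha_{k,l}=\sum_j c_j\psi_j$ in an orthonormal eigenbasis with $\Delta_{k,l}\psi_j=\lambda_j\psi_j$, $0\leq\lambda_j\leq\mu_{k,l}$, one gets $\|\Delta_{k,l}\alpha_{k,l}\|_X^2=\sum_j\lambda_j^2|c_j|^2\leq\mu_{k,l}^2$. Scaling and extending by zero to $\beta^{(k,l)}\in L^2_{\gamma_0}(\mathbb{C}^n)$, we have $\|\beta^{(k,l)}\|^2_{\gamma_0,\mathbb{C}^n}\sim\|\alpha_{k,l}\|_X^2=1$ by (\ref{Norm invariance}) and (\ref{norm localization}), while on every fixed ball $B_R$, which lies in $B_{r_{k,l}}$ once $k,l$ are large since $r_{k,l}\to\infty$, the identity $\Delta_{\overline{\partial}}^{(k,l)}\beta^{(k,l)}=(\Delta_{k,l}\alpha_{k,l})^{(k,l)}$ together with (\ref{norm localization}) gives $\|\Delta_{\overline{\partial}}^{(k,l)}\beta^{(k,l)}\|_{\gamma_0,B_R}\sim\|\Delta_{k,l}\alpha_{k,l}\|_{B_{r_{k,l}}}\leq\mu_{k,l}\to 0$. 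Using (\ref{Laplacian expansion}) to write $\Delta_{\overline{\partial}}^{(k,l)}=\Delta_{\overline{\partial},\gamma_0}+\epsilon_{k,l}\mathcal{D}_{k,l}$ with $\mathcal{D}_{k,l}$ of uniformly bounded coefficients and $\epsilon_{k,l}\to 0$, G{\aa}rding's inequality and elliptic bootstrapping, uniform in $k,l$ because we work only on fixed balls, yield uniform $C^m$-bounds for $\beta^{(k,l)}$ on each $B_R$; a subsequence therefore converges in $C^\infty_{loc}(\mathbb{C}^n)$ to a smooth $\beta$ with $\|\beta\|^2_{\gamma_0,\mathbb{C}^n}\leq 1$, and passing to the limit gives $\Delta_{\overline{\partial},\gamma_0}\beta=0$. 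Thus every normalized sequence from $\mathcal{H}^q_{\leq\mu_{k,l}}$ subconverges, after scaling, to a model $\gamma_0$-harmonic $L^2$-form.

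With this the conclusion follows exactly as in Theorem \ref{Main theorem 1}. If $x\notin X(q)$, then by (\ref{model kernel}) the space of $\gamma_0$-harmonic $L^2$ $(0,q)$-forms is trivial, so any limit $\beta$ vanishes and $\limsup S^{q,k,l}_{\leq\mu_{k,l},X}(x)=|\beta(0)|^2=0$, whence $B^{q,k,l}_{\leq\mu_{k,l}}(x)\to 0$ by (\ref{lemma 2.1}). If $x\in X(q)$, order the coordinates so that $\lambda_1,\dots,\lambda_q$ are the negative eigenvalues; by (\ref{extremal function}) every $\gamma_0$-harmonic $L^2$-form $\beta$ satisfies $\beta^I(0)=0$ for $I\neq(1,\dots,q)$. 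Applying the key step to a maximizing sequence for each $S^{q,k,l}_{\leq\mu_{k,l},I}(x)$ gives $\lim S^{q,k,l}_{\leq\mu_{k,l},I}(x)=|\beta^I(0)|^2=0$ when $I\neq(1,\dots,q)$, while $S^{q,k,l}_{\leq\mu_{k,l},(1,\dots,q)}(x)\leq S^{q,k,l}_{\leq\mu_{k,l},X}(x)$ and $\limsup S^{q,k,l}_{\leq\mu_{k,l},X}(x)\leq S^q_{x,\mathbb{C}^n}(0)$ by the same reasoning, since $|\beta(0)|^2\leq S^q_{x,\mathbb{C}^n}(0)\|\beta\|^2_{\gamma_0,\mathbb{C}^n}\leq S^q_{x,\mathbb{C}^n}(0)$. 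Feeding these into (\ref{lemma 2.1}) gives $\limsup_{k,l,k/l\to\infty}B^{q,k,l}_{\leq\mu_{k,l}}(x)\leq 0+\cdots+0+S^q_{x,\mathbb{C}^n}(0)=1_{X(q)}(x)\big|\det_{\omega_0}(\Theta)_x\big|$, which is the assertion.

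The main obstacle is the inhomogeneous elliptic lemma of the second paragraph: one must upgrade Lemma \ref{lemma 3.1} so that the hypothesis $\Delta_{\overline{\partial}}^{(k,l)}\beta^{(k,l)}=0$ is replaced by $\|\Delta_{\overline{\partial}}^{(k,l)}\beta^{(k,l)}\|_{\gamma_0,B_R}\to 0$, and verify that the resulting estimates are uniform in $k,l$ on each fixed ball. This is precisely where the coefficient control of $\mathcal{D}_{k,l}$ from (\ref{Laplacian expansion}) and the divergence $r_{k,l}\to\infty$ enter, and where $\mu_{k,l}\to 0$, rather than merely bounded, is indispensable: a nonzero limiting eigenvalue would produce limit forms solving $\Delta_{\overline{\partial},\gamma_0}\beta=\lambda\beta$, enlarging the model extremal function and breaking the bound. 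Everything else is the same bookkeeping as in the proofs of Theorems \ref{Main theorem 1} and \ref{weak holomorphic morse inequalities}.
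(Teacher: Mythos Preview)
Your proposal is correct and follows essentially the same route as the paper: both argue that the proof of Theorem \ref{Main theorem 1} carries over once one checks that the scaled forms $\beta^{(k,l)}$ coming from $\mathcal{H}^q_{\leq\mu_{k,l}}$ still subconverge to a $\gamma_0$-harmonic $L^2$ limit, and both obtain this by combining the spectral bound $\|\Delta_{k,l}^m\alpha_{k,l}\|_X\leq\mu_{k,l}^m\to 0$ with G{\aa}rding's inequality on fixed balls (uniform in $k,l$ via (\ref{Laplacian expansion})). The paper's write-up is terser and emphasizes that all powers $(\Delta_{\overline{\partial}}^{(k,l)})^m\beta^{(k,l)}$ vanish in the limit, which is exactly what your ``elliptic bootstrapping'' phrase encodes; you may want to state the bound for general $m$ explicitly, since a single application of G{\aa}rding only gives $H^2$ control.
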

\begin{proof}
The proof is a simple modification of the previous proof of the local Demailly-Bouche's weak holomorphic morse inequalities and in what follows these modifications will be presented. 

The difference is that $\alpha_{k,l}\in \mathcal{H}^q_{\leq \mu_{k,l}}(X,E^k\otimes F^l)$  and we have to prove that all  terms of the form $(\Delta_{\overline{\partial}}^{(k,l)})^m(\alpha^{(k,l)})=(\Delta_{\overline{\partial}}^{(k,l)})^m\beta^{(k,l)}$ vanish in the limit.

 For any ball $B$,
\begin{align}
\|(\Delta_{\overline{\partial}}^{(k,l)})^m\beta^{(k,l)}\|^2_{\gamma_0,B}\leq \|(\Delta_{\overline{\partial}}^{(k,l)})^m\alpha^{(k,l)}\|^2_{\gamma_0, B_{|z'|<r_{k,l},|z''|<r_{k,l}}}\leq \|\Delta^m_{\overline{\partial}}\alpha_{k,l}\|^2_X\notag
\end{align}
and the last term is just a sequence tending to zero because
\begin{align}
\mu_{k,l}^m\rightarrow 0.\notag
\end{align}

Since by assumption, $\alpha_{k,l}$ is of unit norm and in $\mathcal{H}^q_{\leq \mu_{k,l}}(X,E^k\otimes F^l)$, and $\mu_{k,l}\rightarrow 0$, G{\aa}rding's inequality as in Lemma \ref{lemma 3.1} gives that
\begin{align}
\|\beta^{(k,l)}\|^2_{\gamma_0,B,2m}\sim C(\|\beta^{(k,l)}\|^2_{\gamma_0,B}+\|(\Delta_{\overline{\partial}}^{(k,l)})^m\beta^{(k,l)}\|^2_{\gamma_0,B})\leq (C+\mu_{k,l}^{2m})\leq C'\notag
\end{align}
which shows that the conclusion of Lemma \ref{lemma 3.1} is still valid.  Finally, $\Delta_{\overline{\partial},\gamma_0}\beta=0$ as before and the rest of the argument goes through word by word.
\end{proof}

The next lemma provides the sequence that takes the right value at a given point $x\in X(q)$, with "small" Laplacian, that was referred to at the beginning of the section.

\begin{lemma}\label{part2}
Let $c_{\Theta}(x)=1_{X(q)}(x)\big|det_{\omega_0}(\Theta)_x\big|$. For any point $x_0\in X(q)$ there is a sequence $\alpha_{k,l}$ such that $\alpha_{k,l}$ is in $\Omega^q(X,E^k\otimes F^l)$ with
\begin{align}
  &(i) ~~~ |\alpha_{k,l}(x_0)|^2=c_\Theta(x_0)\notag\\
   &(ii) ~~~~ \lim \|\alpha_{k,l}\|^2=1\notag\\
  &(iii) ~~~~  \|(\Delta_{k,l})^m\alpha_{k,l}\|\rightarrow 0\notag
\end{align}
Moreover, there is a sequence $\delta_{k,l}$ independent of $x_0$ and tending to zero, such that
\begin{align}
&(iv)~~~~~\langle\Delta_{k,l}\alpha_{k,l},\alpha_{k,l} \rangle_X\leq \delta_{k,l}.\notag
\end{align}
\end{lemma}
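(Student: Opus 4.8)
The plan is to construct $\alpha_{k,l}$ explicitly as a cutoff of the model extremal form on $\mathbb{C}^n$, transported back to $X$ via the inverse of the scaling map $f^{(k,l)}$. Fix $x_0\in X(q)$ and work in the coordinate chart from Section \ref{localization}, with the local frames of $E,F$ chosen so that the weights have the normal forms $k\phi$, $l\psi$ described there. On the model space $\mathbb{C}^n$ with weight $\gamma_0(z)=\sum_{i=1}^r\lambda_{x_0,i}|z_i|^2+\sum_{i=r+1}^n\nu_{x_0,i}|z_i|^2$, the extremal form realizing $S^q_{x_0,\mathbb{C}^n}(0)$ is the explicit Gaussian $(0,q)$-form $u(z)=c\,e^{-\sum|\lambda_i||z_i|^2/\cdots}\,d\bar z_1\wedge\cdots\wedge d\bar z_q$ (only the multi-index $I=(1,\dots,q)$ survives by \eqref{extremal function}), normalized so that $\|u\|^2_{\gamma_0,\mathbb{C}^n}=1$ and $|u(0)|^2=c_\Theta(x_0)$; crucially $\Delta_{\overline\partial,\gamma_0}u=0$. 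I would then set $\alpha_{k,l}:=\chi_{k,l}\cdot (f^{(k,l)})_*u$, where $\chi_{k,l}$ is a cutoff supported in $B_{r_{k,l}}$ equal to $1$ on $B_{r_{k,l}/2}$ with derivatives controlled by $1/r_{k,l}$, and interpret $(f^{(k,l)})_*u$ using the local frames to get a section of $E^k\otimes F^l$ over $X$ (extended by zero). Property (i) is immediate since the cutoff is $1$ near $x_0$ and the norm identity \eqref{Norm invariance} is exact at the center.

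Next I would verify (ii) and the Laplacian estimates. For (ii): by \eqref{norm localization} and \eqref{Norm invariance}, $\|\alpha_{k,l}\|^2_X\sim\|\chi_{k,l}u\|^2_{\gamma_0,B_{|z'|<r_{k,l},|z''|<r_{k,l}}}$, and since $u$ decays like a Gaussian while $B_{|z'|<r_{k,l},|z''|<r_{k,l}}$ exhausts $\mathbb{C}^n$, the right side tends to $\|u\|^2_{\gamma_0,\mathbb{C}^n}=1$. For (iii): write $\Delta_{k,l}\alpha_{k,l}$ pulled back under $f^{(k,l)}$ as $\Delta_{\overline\partial}^{(k,l)}(\chi_{k,l}u)$, and use the expansion \eqref{Laplacian expansion}, $\Delta_{\overline\partial}^{(k,l)}=\Delta_{\overline\partial,\gamma_0}+\epsilon_{k,l}\mathcal{D}_{k,l}$. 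Since $\Delta_{\overline\partial,\gamma_0}u=0$, the commutator terms $[\Delta_{\overline\partial,\gamma_0},\chi_{k,l}]u$ involve at least one derivative of $\chi_{k,l}$, hence are supported in the annulus $r_{k,l}/2<|z|<r_{k,l}$ where $u$ is exponentially small, so their $L^2_{\gamma_0}$ norm is $O(e^{-c\,r_{k,l}^2})\to0$; the term $\epsilon_{k,l}\mathcal{D}_{k,l}(\chi_{k,l}u)$ is bounded by $\epsilon_{k,l}$ times a fixed constant (Gaussian tails again), so it tends to zero. Iterating this — each application of $\Delta_{\overline\partial}^{(k,l)}$ either kills the Gaussian part or produces another factor of $\epsilon_{k,l}$ or an exponentially small annular contribution — gives $\|(\Delta_{\overline\partial}^{(k,l)})^m(\chi_{k,l}u)\|_{\gamma_0}\to0$ for every fixed $m$, and by \eqref{norm localization} again, $\|(\Delta_{k,l})^m\alpha_{k,l}\|_X\to0$, which is (iii).

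For (iv), the uniformity in $x_0$, I would take $m=1$ in the above and note $\langle\Delta_{k,l}\alpha_{k,l},\alpha_{k,l}\rangle_X\leq\|\Delta_{k,l}\alpha_{k,l}\|_X\cdot\|\alpha_{k,l}\|_X$, so it suffices that the bound on $\|\Delta_{k,l}\alpha_{k,l}\|_X$ can be made independent of $x_0$. This follows because: the sequence $\epsilon_{k,l}$ in \eqref{Laplacian expansion} is uniform in $x_0$ (the estimates \eqref{metric 1}, \eqref{metric 2} hold uniformly on the compact $X$, exactly as in the compactness argument for $S^{q,k,l}_X$ in Theorem \ref{weak holomorphic morse inequalities} where $k_0,l_0$ were chosen uniformly near $x_0$); the coefficients of $\mathcal{D}_{k,l}$ are uniformly bounded; the eigenvalues $\lambda_{x_0,i},\nu_{x_0,i}$ vary continuously over the compact $X$, so the Gaussian $u$ and all the constants in its tail estimates are uniformly controlled; and $r_{k,l}$ does not depend on $x_0$. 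Covering $X(q)$'s closure by finitely many charts and taking $\delta_{k,l}$ to be the maximum of the finitely many resulting sequences gives the desired uniform $\delta_{k,l}\to0$.

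The main obstacle I anticipate is bookkeeping in step (iii)/(iv): making precise that the "errors" genuinely go to zero requires carefully separating the three sources of error — the cutoff commutator (exponentially small by Gaussian decay, but one must check the cutoff scale $r_{k,l}$ grows fast enough, which is exactly why $r_{k,l}=\log\min\{k/l,l\}\to\infty$ is the right choice), the perturbation $\epsilon_{k,l}\mathcal{D}_{k,l}$ (small by \eqref{Laplacian expansion}), and the discrepancy between the scaled metric/weight and the flat model (controlled by \eqref{metric 1}, \eqref{metric 2}) — and then tracking these through $m$ iterations of a second-order operator without the constants blowing up with $m$; since $m$ is an arbitrary but fixed integer this is fine, but it must be said cleanly. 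A secondary subtlety is checking that $(f^{(k,l)})_*u$, built from the model weight $\gamma_0$, actually differs from a genuine smooth section of $E^k\otimes F^l$ in the chosen frame only by the higher-order terms $kO_1(k,l)(|z|^3)+lO_2(k,l)(|z|^3)$ appearing in the expansions of $(k\phi)^{(k,l)}$ and $(l\psi)^{(k,l)}$, which by the remarks after \eqref{metric 2} are uniformly small on $B_{r_{k,l}}$ — so $\alpha_{k,l}$ as defined is legitimately a form in $\Omega^q(X,E^k\otimes F^l)$ and all the norm comparisons $\sim$ are valid.
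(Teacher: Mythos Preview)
Your construction and overall strategy are exactly the paper's: define the model Gaussian extremal form on $\mathbb{C}^n$, cut it off at scale $r_{k,l}$, push it back via the inverse scaling, and exploit the expansion \eqref{Laplacian expansion} together with $\Delta_{\overline\partial,\gamma_0}u=0$. Parts (i)--(iii) match the paper's argument essentially verbatim.

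There is one genuine soft spot in your treatment of (iv). For the commutator term in (iii) you invoke the estimate $O(e^{-c\,r_{k,l}^2})$ coming from Gaussian decay of $u$ in the annulus. This is fine at a \emph{fixed} $x_0$, but for (iv) you then assert that ``the Gaussian $u$ and all the constants in its tail estimates are uniformly controlled'' by continuity of the eigenvalues on compact $X$. That is not correct: the decay rate $c$ is governed by $\min_i|\lambda_{x_0,i}|$, and as $x_0$ approaches $\partial X(q)$ some eigenvalue of $\Theta$ tends to zero, so $e^{-c\,r_{k,l}^2}$ is \emph{not} uniformly small over $X(q)$. Your finite-cover argument does not help, since within each chart the constant still depends on $x_0$.

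The paper avoids this by using the identity $\langle\Delta_{k,l}\alpha_{k,l},\alpha_{k,l}\rangle=\|(\overline\partial+\overline\partial^{\,*})\alpha_{k,l}\|^2$ and applying Leibniz' rule to the \emph{first-order} operator. The cutoff commutator then contributes $\frac{C}{r_{k,l}^2}\|\beta\|^2$, and since $\|\beta\|^2=1$ \emph{independently of the eigenvalues}, this term is automatically uniform in $x_0$. The remaining $\epsilon_{k,l}$ term involves $\|\partial_i\beta\|^2$, which one computes to be $|\lambda_i|$; these are bounded \emph{above} on compact $X$ (no lower bound needed), so uniformity follows. You can repair your argument either by switching to this first-order route, or by replacing your exponential annular bound with the cruder $\|[\Delta_{\overline\partial,\gamma_0},\chi_{k,l}]u\|^2\leq C r_{k,l}^{-2}(\|u\|^2+\sum_i\|\partial_i u\|^2)$, which depends only on upper bounds for the eigenvalues and is therefore uniform.
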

\begin{proof}
We may assume that the first $q$ eigenvalues $\lambda_{x_0,i}$ are  negative, while the remaining eigenvalues are positive.

 Define the following form in $\mathbb{C}^n$:
\begin{align}
\beta(w)=(\frac{|\lambda_1|\cdots|\lambda_n|}{\pi^n})^{\frac{1}{2}}e^{\sum_{i=1}^q\lambda_i|w_i|^2}d\overline{w_1}\wedge\cdots\wedge d\overline{w_q}\notag
\end{align}
so that $|\beta|^2_{\gamma_0}=\frac{|\lambda_1|\cdots |\lambda_n|}{\pi^n}e^{{-\sum_{i=1}^n}|\lambda_i||w_i|^2}$ and $\|\beta\|^2_{\gamma_0,\mathbb{C}^n}=1$. Observe that $\beta$ is in $L^{2,m}_{\gamma_0}$, the Sobolev space with $m$ derivatives in $L^2_{\gamma_0}$, for all  $m$.

Now define $\alpha_{k,l}$ on $X$ by
\begin{align}
\alpha_{k,l}(z):=\chi_{k,l}(\sqrt{k}z',\sqrt{l}z'')\beta(\sqrt{k}z',\sqrt{l}z'')\notag
\end{align}
where $\chi_{k,l}=\chi(\frac{w'}{\sqrt{k}},\frac{w''}{\sqrt{l}})$ and $\chi $ is  a smooth function supported on the unit ball, which equals one on the ball of radius $\frac{1}{2}$.

It is a direct computation that  $|\alpha_{k,l}(x_0)|^2=c_{\Theta}(x_0)$, which implies $(i)$.

To see $(ii)$,  note that
\begin{align}\label{norm1}
\|\alpha_{k,l}\|_X^2=\|\chi_{k,l}\beta\|^2_{\gamma_0,\mathbb{C}^n}=\|\beta\|^2_{\gamma_0,\frac{1}{2}r_{k,l},\frac{1}{2}r_{k,l}}+\|\chi_{k,l}\beta\|^2_{\gamma_0,\geq\frac{1}{2}r_{k,l},\geq\frac{1}{2}r_{k,l} }
\end{align}
and the 'tail' $\|\chi_{k,l}\beta\|^2_{\gamma_0,\geq\frac{1}{2}r_{k,l},\geq\frac{1}{2}r_{k,l}} $ tends to zero, since $\beta$ is in $L^2_{\gamma_0,C^n}$ and $r_{k,l}$ tends to infinity.

Now we show $(iii)$. Changing variables and using (\ref{Laplacian expansion}) gives
\begin{align}
\|(\Delta_{k,l})^m\alpha_{k,l}\|^2_X&\sim \|(\Delta_{\overline{\partial}}^{(k,l)})^m\chi_{k,l}\beta\|^2_{\gamma_0,r_{k,l},r_{k,l}}\notag\\
&= \|(\Delta_{\overline{\partial}}^{(k,l)})^{m-1}(\Delta_{\overline{\partial},\gamma_0}+\epsilon_{k,l}\mathcal{D}_{k,l})\chi_{k,l}\beta\|^2_{\gamma_0,r_{k,l},r_{k,l}}\notag
\end{align}
where $\mathcal{D}_{k,l}$ is a second order partial differential operator, whose coefficients have derivatives that are uniformly bounded in $k$.

To see that this tends to zero first observe that
\begin{align}\label{part1}
\|(\Delta_{\overline{\partial}}^{(k,l)})^{m-1}\Delta_{\overline{\partial},\gamma_0}\chi_{k,l}\beta\|^2_{\gamma_0,r_{k,l},r_{k,l}}
\end{align}
tends to zero. Indeed $\beta$ has been chosen so that $\Delta_{\overline{\partial},\gamma_0}\beta=0$. Moreover $\Delta_{\overline{\partial},\gamma_0}$ is the square of the first order operator  $\overline{\partial}+\overline{\partial}^{*,\gamma_0}$ which also annihilates $\beta$ and obeys a Lebniz like rule, showing that
\begin{align}
\Delta_{\overline{\partial},\gamma_0}\chi_{k,l}\beta=\gamma_{k,l}\beta\notag
\end{align}
where $\gamma_{k,l}$ is a function, uniformly bounded in $k,l$ and supported outside the ball $B_{\frac{1}{2}r_{k,l},\frac{1}{2}r_{k,l}}$ ($\gamma_{k,l}$ contains second derivatives of $\chi_{k,l}$).

Now using (\ref{Laplacian expansion}) again we see that (\ref{part1}) is bounded by the norm of  $\gamma_{k,l}p(w,\overline{w})\beta$, where $p$ is a polynomial, and thus tends to zero estimated by the 'tail' of a  convergence integral, as in (\ref{norm1})-the polynomial does not affect the convergence.

To finish the proof of (iii), it is now enough to show that
\begin{align}
\|(\Delta_{\overline{\partial}}^{(k,l)})^{m-1}\mathcal{D}_{k,l}(\chi_{k,l}\beta)\|^2_{\gamma_0,r_{k,l},r_{k,l}}\notag
\end{align}
is uniformly bounded. As above one sees that the integrand is bounded by the norm of $q(w,\overline{w})\beta$, for some polynomial $q$, which is finite as above.

To prove (iv),  observe that as above,
\begin{align}
\langle \Delta_{k,l}\alpha_{k,l},\alpha_{k,l}\rangle=\|(\overline{\partial}+\overline{\partial}^{*})\alpha_{k,l}\|^2_X\sim \|(\overline{\partial}+\overline{\partial}^{*,(k,l)})(\chi_{k,l}\beta)\|^2_{r_{k,l},r_{k,l}}.\notag
\end{align}

Hence, by Leibniz' rule
\begin{align}
\langle\Delta_{k,l}\alpha_{k,l},\alpha_{k,l}\rangle\sim \|\chi_{k,l}(\overline{\partial}+\overline{\partial}^{*,(k,l)})\beta\|^2_{r_{k,l},{r_{k,l}}}+\frac{C}{r^2_{k,l}}\|\beta\|^2_{C^n}.\notag
\end{align}
Clearly, there is an expansion for the first order operator $(\overline{\partial}+\overline{\partial}^{*})^{(k,l)}$ as in (\ref{Laplacian expansion}), giving
\begin{align}
\langle\Delta_{k,l}\alpha_{k,l},\alpha_{k,l}\rangle\sim \varepsilon_{k,l}(\|\beta\|^2+\sum_{i=1}^{2n}\|\partial_i\beta\|^2)+\frac{C}{r^2_{k,l}}\|\beta\|^2.\notag
\end{align}
Note that even if $\|\beta\|^2$ is independent of the eigenvalues $\lambda_{i,x_0}$, the norms $\|\partial_i\beta\|^2$ do depend on the eigenvalues, and hence on the point $x_0$. But the dependence amounts to a factor of eigenvalues and since $X$ is compact, we deduce that $\|\partial_i\beta\|^2$ is bounded by a constant independent of the point $x_0$. This shows that $\langle\Delta_{k,l}\alpha_{k,l},\alpha_{k,l}\rangle\leq \delta_{k,l}$. Note that $\varepsilon_{k,l}$ also can be taken to be independent of the point $x_0$, by a similar argument. This completes the proof of (iv).

\end{proof}

\begin{theorem}\label{pro 5.3}
Assume that the sequence $\mu_{k,l}$ is such that $\mu_{k,l}\neq 0$ and $\frac{\delta_{k,l}}{\mu_{k,l}}\rightarrow 0$, where $\delta_{k,l}$ is the sequence appearing in Lemma \ref{part2}. Then for any point $x\in X(q)$, the following holds
\begin{align}
\liminf_{k,l,\frac{k}{l}\rightarrow \infty} B^{q,k,l}_{\leq \mu_{k,l}}(x)\geq1_{X(q)}(x)\big|det_{\omega_0}(\Theta)_x\big|.\notag
\end{align}
\end{theorem}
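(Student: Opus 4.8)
\textbf{Proof proposal for Theorem \ref{pro 5.3}.}

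The plan is to produce, for a fixed $x_0\in X(q)$, a genuine element of $\mathcal{H}^q_{\leq\mu_{k,l}}(X,E^k\otimes F^l)$ whose value at $x_0$ is asymptotically $c_\Theta(x_0)$, by projecting the test form $\alpha_{k,l}$ from Lemma \ref{part2} onto the low-eigenvalue subspace and showing the projection loses nothing in the limit. First I would take the sequence $\alpha_{k,l}\in\Omega^q(X,E^k\otimes F^l)$ supplied by Lemma \ref{part2}, with properties (i)--(iv). Let $P_{\leq\mu_{k,l}}$ denote the orthogonal projection of $L^2_{(0,q)}(X,E^k\otimes F^l)$ onto $\mathcal{H}^q_{\leq\mu_{k,l}}$, and write $\alpha_{k,l}=\alpha'_{k,l}+\alpha''_{k,l}$ with $\alpha'_{k,l}=P_{\leq\mu_{k,l}}\alpha_{k,l}$ and $\alpha''_{k,l}$ in the span of eigenforms with eigenvalue $>\mu_{k,l}$. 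The key norm estimate is the standard one: since $\Delta_{\overline\partial}\geq\mu_{k,l}$ on the orthogonal complement,
\begin{align}
\mu_{k,l}\|\alpha''_{k,l}\|^2_X\leq\langle\Delta_{k,l}\alpha''_{k,l},\alpha''_{k,l}\rangle_X\leq\langle\Delta_{k,l}\alpha_{k,l},\alpha_{k,l}\rangle_X\leq\delta_{k,l},\notag
\end{align}
so by the hypothesis $\delta_{k,l}/\mu_{k,l}\to 0$ we get $\|\alpha''_{k,l}\|^2_X\to 0$, and hence $\|\alpha'_{k,l}\|^2_X\to 1$ by property (ii).

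Next I would control the pointwise defect $|\alpha''_{k,l}(x_0)|^2$. Here I use the Bergman-kernel/extremal-function machinery together with the already-proven Theorem \ref{proposition 5.1}: the form $\alpha''_{k,l}/\|\alpha''_{k,l}\|_X$ lies in $\mathcal{H}^q_{\leq\mu_{k,l}}$ (note $\alpha''_{k,l}$ is a combination of eigenforms, but to stay inside $\mathcal{H}^q_{\leq\mu_{k,l}}$ one should instead bound $\alpha''_{k,l}$ by applying the Gårding/elliptic estimate of Lemma \ref{lemma 3.1}--style directly to $\alpha''_{k,l}$, since it satisfies the same a priori bounds $\|(\Delta_{\overline\partial}^{(k,l)})^m(\alpha''_{k,l})^{(k,l)}\|^2\to 0$ inherited from (iii) and the projection being an orthogonal decomposition into eigenspaces). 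Concretely, rescaling $\alpha''_{k,l}$ as in Section \ref{localization}, one gets $|\alpha''_{k,l}(x_0)|^2\leq C\|\alpha''_{k,l}\|^2_X\to 0$ uniformly, so that
\begin{align}
\liminf_{k,l,\frac{k}{l}\to\infty}|\alpha'_{k,l}(x_0)|^2=\liminf_{k,l,\frac{k}{l}\to\infty}|\alpha_{k,l}(x_0)|^2=c_\Theta(x_0)\notag
\end{align}
by property (i) and the elementary inequality $|\alpha'_{k,l}(x_0)|\geq|\alpha_{k,l}(x_0)|-|\alpha''_{k,l}(x_0)|$. Finally, since $\alpha'_{k,l}\in\mathcal{H}^q_{\leq\mu_{k,l}}(X,E^k\otimes F^l)$ with $\|\alpha'_{k,l}\|_X\to 1$, by the definition of the Bergman kernel function one has $B^{q,k,l}_{\leq\mu_{k,l}}(x_0)\geq |\alpha'_{k,l}(x_0)|^2/\|\alpha'_{k,l}\|^2_X$, and taking $\liminf$ yields $\liminf B^{q,k,l}_{\leq\mu_{k,l}}(x_0)\geq c_\Theta(x_0)$, as claimed.

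The step I expect to be the main obstacle is the uniform pointwise control of the off-diagonal piece $\alpha''_{k,l}$: one must argue that $\alpha''_{k,l}$, although only known to have \emph{small} $L^2$ norm, also has small sup-norm at $x_0$, which requires that it satisfy the same elliptic a priori estimates (uniform in $k,l$ after rescaling) that were used for harmonic forms in Lemma \ref{lemma 3.1} and in the proof of Theorem \ref{proposition 5.1}. The clean way is to observe that $\alpha''_{k,l}$ is a sum of eigenforms with eigenvalues in $(\mu_{k,l},\infty)$ but \emph{also} equals $\alpha_{k,l}-\alpha'_{k,l}$; applying $(\Delta_{k,l})^m$ and using (iii) for $\alpha_{k,l}$ together with the spectral bound on $\alpha'_{k,l}$ is not directly enough, so instead I would estimate $|\alpha''_{k,l}(x_0)|^2$ via the reproducing property against $B^{q,k,l}_{>\mu_{k,l}}$ or, more simply, bound it by $B^{q,k,l}_X(x_0)^{1/2}\|\alpha''_{k,l}\|_X$ using Cauchy--Schwarz for the full harmonic Bergman kernel (which is $O(1)$ by the compact case of Theorem \ref{weak holomorphic morse inequalities}), so that $|\alpha''_{k,l}(x_0)|^2\leq O(1)\cdot\|\alpha''_{k,l}\|_X\to 0$. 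One then also needs to check that $\mu_{k,l}$ can indeed be chosen with $\mu_{k,l}\neq 0$ and $\delta_{k,l}/\mu_{k,l}\to 0$ while still $\mu_{k,l}\to 0$ (take e.g. $\mu_{k,l}=\sqrt{\delta_{k,l}}$ when $\delta_{k,l}>0$), which is the compatibility condition tying this theorem to Theorem \ref{proposition 5.1} and yielding the desired asymptotic equality.
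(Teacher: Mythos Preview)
Your overall architecture is exactly the paper's: decompose $\alpha_{k,l}=\alpha'_{k,l}+\alpha''_{k,l}$ along $\mathcal{H}^q_{\leq\mu_{k,l}}\oplus\mathcal{H}^q_{>\mu_{k,l}}$, use the spectral lower bound $\mu_{k,l}\|\alpha''_{k,l}\|^2\leq\langle\Delta_{k,l}\alpha_{k,l},\alpha_{k,l}\rangle\leq\delta_{k,l}$ to kill the $L^2$ norm of $\alpha''_{k,l}$, control $|\alpha''_{k,l}(x_0)|$ pointwise, and conclude via the extremal characterisation of $B^{q,k,l}_{\leq\mu_{k,l}}$. So the proposal is correct and matches the paper's route.

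Where you hesitate is precisely where the paper does \emph{not}: your parenthetical ``since it satisfies the same a priori bounds $\|(\Delta_{\overline\partial}^{(k,l)})^m(\alpha''_{k,l})^{(k,l)}\|^2\to 0$ inherited from (iii) and the projection being an orthogonal decomposition into eigenspaces'' is already the full argument. Because $\Delta_{k,l}$ commutes with the spectral projections, $(\Delta_{k,l})^m\alpha''_{k,l}=P_{>\mu_{k,l}}\bigl((\Delta_{k,l})^m\alpha_{k,l}\bigr)$, hence $\|(\Delta_{k,l})^m\alpha''_{k,l}\|_X\leq\|(\Delta_{k,l})^m\alpha_{k,l}\|_X\to 0$ by (iii). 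Feeding this and $\|\alpha''_{k,l}\|_X\to 0$ into the rescaled G{\aa}rding inequality
\[
|\alpha''_{k,l}(x_0)|^2\;\lesssim\;\|\alpha''_{k,l}\|_X^2+\|(\Delta_{k,l})^m\alpha''_{k,l}\|_X^2
\]
gives $|\alpha''_{k,l}(x_0)|\to 0$. Your later remark that ``(iii) together with the spectral bound on $\alpha'_{k,l}$ is not directly enough'' is therefore unwarranted; it is enough, and it is exactly what the paper does.

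By contrast, your proposed shortcut ``bound it by $B^{q,k,l}_X(x_0)^{1/2}\|\alpha''_{k,l}\|_X$ using Cauchy--Schwarz for the full harmonic Bergman kernel'' is a genuine error: $B^{q,k,l}_X$ is the Bergman kernel of the \emph{harmonic} space, and $\alpha''_{k,l}$ is not harmonic (it lives in the span of eigenforms with eigenvalue $>\mu_{k,l}$), so there is no reproducing identity to apply. The alternative of using $B^{q,k,l}_{>\mu_{k,l}}$ would require an independent uniform bound on that kernel, which you do not have. Drop the detour and keep the G{\aa}rding $+$ commutation argument; that is both correct and the paper's proof.
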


\begin{proof}
Let $\{\alpha_{k,l}\}$ be the sequence that Lemma \ref{part2} provides and decompose it with respect to the orthongonal decomposition $\Omega^{0,q}(X,L^k)=\mathcal{H}^q_{\leq \mu_{k,l}}(X,E^k\otimes F^l )\oplus \mathcal{H}^q_{> \mu_{k,l}}(X,E^k\otimes F^l )$ induced by the spectral decomposition of the elliptic operator $\Delta_{\overline{\partial}}$:
\begin{align}
\alpha_{k,l}=\alpha_{1,k,l}+\alpha_{2,k,l}.\notag
\end{align}

Firstly, we prove that
\begin{align}\label{5.5}
\lim |\alpha^{(k,l)}_2(0)|^2=0.
\end{align}

As in the proof of Lemma \ref{lemma 3.1}, we have that
\begin{align}
|\alpha_2^{(k,l)}(0)|^2\leq C(x)(\|\alpha^{(k,l)}_2\|^2_{B_1}+\|(\Delta_{\overline{\partial}}^{(k,l)})^m\alpha_2^{(k,l)}\|^2_{B_1}).\notag
\end{align}

To see that the first term tends to zero, observe that by the spectral decomposition of $\Delta_{k,l}$:
\begin{align}
\|\alpha_2^{(k,l)}\|^2_X\leq \frac{1}{\mu_{k,l}}\langle \Delta_{k,l}\alpha_{2,k,l},\alpha_{2,k,l}\rangle\leq \frac{\delta_{k,l}}{\mu_{k,l}}.\notag
\end{align}
Furthermore, the second term also tends to zero:
\begin{align}
\|(\Delta_{\overline{\partial}}^{(k,l)})^m\alpha_2^{(k,l)}\|^2_{B_1}\leq \|(\Delta_{\overline{\partial},\gamma_0})^m\alpha_{2,k,l}\|^2_X\rightarrow 0\notag
\end{align}
by $(iii)$ in Lemma \ref{part2}. Finally,
\begin{align}
S^{q,k,l}_{\leq\mu_{k,l}}&\geq \frac{|\alpha_{1,k,l}(0)|^2}{\|\alpha_{1,k,l}\|^2_X}\notag\\
&\geq |\alpha_{1,k,l}(0)|^2\notag\\
&=|\alpha_{k,l}(0)-\alpha_{2,k,l}(0)|^2.\notag
\end{align}

By (\ref{5.5}), we can see that this tends to the limit of $|\alpha_{k,l}(0)|^2$, which proves the theorem according to (\ref{lemma 2.1}) and $(i)$ in Lemma \ref{part2}. 
\end{proof}

Now we can prove the following asymptotic equality:
\begin{theorem}[=Theorem \ref{Main theorem 2}]
Let $(X,\omega)$ be a compact hermitian manifold. Then
\begin{align}
\lim_{k,l,\frac{k}{l}\rightarrow \infty}B^{q,k,l}_{\leq \mu_{k,l}}(x)=1_{X(q)}(x)\big|det_{\omega_0}(\Theta)_x\big|\notag
\end{align}
for some sequence $\mu_{k,l}$ tending to zero.
\end{theorem}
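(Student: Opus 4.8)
The plan is to combine the two one-sided estimates already established for the spectrally truncated Bergman kernel. From Theorem~\ref{proposition 5.1} we have, for \emph{any} sequence $\mu_{k,l}\to 0$,
\[
\limsup_{k,l,\frac{k}{l}\rightarrow \infty}B^{q,k,l}_{\leq \mu_{k,l}}(x)\leq 1_{X(q)}(x)\big|det_{\omega_0}(\Theta)_x\big|,
\]
while Theorem~\ref{pro 5.3} gives, provided $\mu_{k,l}\neq 0$ and $\delta_{k,l}/\mu_{k,l}\to 0$ with $\delta_{k,l}$ the sequence from Lemma~\ref{part2},
\[
\liminf_{k,l,\frac{k}{l}\rightarrow \infty}B^{q,k,l}_{\leq \mu_{k,l}}(x)\geq 1_{X(q)}(x)\big|det_{\omega_0}(\Theta)_x\big|
\]
at every point $x\in X(q)$, and the upper bound already forces the limit to be $0$ off $X(q)$. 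So the entire content of the proof is to exhibit one single sequence $\mu_{k,l}$ meeting both sets of hypotheses simultaneously.

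First I would recall that Lemma~\ref{part2} produces a sequence $\delta_{k,l}\to 0$ that is \emph{independent of the point} $x_0\in X(q)$ (this is exactly conclusion (iv), and the closing remarks of that lemma's proof stress the independence of $x_0$ using compactness of $X$). Then I would simply set, e.g.,
\[
\mu_{k,l}:=\sqrt{\delta_{k,l}},
\]
or more cautiously $\mu_{k,l}:=\max\{\sqrt{\delta_{k,l}},\,1/r_{k,l}\}$ to be safe about the degenerate case $\delta_{k,l}=0$; then $\mu_{k,l}\to 0$, $\mu_{k,l}\neq 0$, and $\delta_{k,l}/\mu_{k,l}\leq \sqrt{\delta_{k,l}}\to 0$. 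With this choice the hypotheses of both Theorem~\ref{proposition 5.1} and Theorem~\ref{pro 5.3} hold, so the two inequalities squeeze $B^{q,k,l}_{\leq \mu_{k,l}}(x)$ to the common value $1_{X(q)}(x)\big|det_{\omega_0}(\Theta)_x\big|$ pointwise in $x$.

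The one genuine subtlety — and the step I would be most careful about — is uniformity in $x$. Theorem~\ref{pro 5.3} as stated is a pointwise statement at each $x\in X(q)$; but since the sequence $\mu_{k,l}$ we select depends only on $\delta_{k,l}$, which Lemma~\ref{part2}(iv) guarantees is independent of $x_0$, the same $\mu_{k,l}$ works at every point of $X(q)$ at once, and likewise the upper bound of Theorem~\ref{proposition 5.1} is uniform in the trivial sense of holding for all $x$. Hence the limit identity holds pointwise on all of $X$ with a single sequence $\mu_{k,l}\to 0$, which is precisely the assertion of Theorem~\ref{Main theorem 2}; combining the displayed $\limsup$ and $\liminf$ bounds then finishes the proof.

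\begin{proof}
By Theorem~\ref{proposition 5.1}, for any sequence $\mu_{k,l}\to 0$ we have
\begin{align}
\limsup_{k,l,\frac{k}{l}\rightarrow \infty}B^{q,k,l}_{\leq \mu_{k,l}}(x)\leq 1_{X(q)}(x)\big|det_{\omega_0}(\Theta)_x\big|\notag
\end{align}
for every $x\in X$; in particular the left side is $0$ for $x\notin X(q)$, so the asserted equality holds there for \emph{any} such $\mu_{k,l}$.

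Let $\delta_{k,l}$ be the sequence furnished by Lemma~\ref{part2}; it tends to zero and, by conclusion (iv) of that lemma together with the remarks at the end of its proof, it can be chosen independently of the point in $X(q)$. Put
\begin{align}
\mu_{k,l}:=\max\Big\{\sqrt{\delta_{k,l}},\ \tfrac{1}{r_{k,l}}\Big\}.\notag
\end{align}
Then $\mu_{k,l}\to 0$ as $k,l,\frac{k}{l}\to\infty$, $\mu_{k,l}\neq 0$, and
\begin{align}
\frac{\delta_{k,l}}{\mu_{k,l}}\leq \frac{\delta_{k,l}}{\sqrt{\delta_{k,l}}}=\sqrt{\delta_{k,l}}\longrightarrow 0.\notag
\end{align}
Thus the hypotheses of Theorem~\ref{pro 5.3} are satisfied, and for every $x\in X(q)$,
\begin{align}
\liminf_{k,l,\frac{k}{l}\rightarrow \infty}B^{q,k,l}_{\leq \mu_{k,l}}(x)\geq 1_{X(q)}(x)\big|det_{\omega_0}(\Theta)_x\big|.\notag
\end{align}

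Combining the $\limsup$ bound from Theorem~\ref{proposition 5.1} with the $\liminf$ bound just obtained, we conclude that the limit exists at every $x\in X$ and
\begin{align}
\lim_{k,l,\frac{k}{l}\rightarrow \infty}B^{q,k,l}_{\leq \mu_{k,l}}(x)=1_{X(q)}(x)\big|det_{\omega_0}(\Theta)_x\big|.\notag
\end{align}
This proves the theorem.
\end{proof}
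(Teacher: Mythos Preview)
Your proof is correct and follows essentially the same approach as the paper: set $\mu_{k,l}$ to be (essentially) $\sqrt{\delta_{k,l}}$, then squeeze using Theorem~\ref{proposition 5.1} for the upper bound and Theorem~\ref{pro 5.3} for the lower bound on $X(q)$, with the upper bound alone handling the complement. Your extra $\max\{\cdot,1/r_{k,l}\}$ safeguard against $\delta_{k,l}=0$ and your remark on uniformity in $x$ are minor elaborations the paper omits, but the argument is the same.
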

\begin{proof}
Let $\mu_{k,l}=\sqrt{\delta_{k,l}}$. The theorem then follows immediately from Theorem \ref{proposition 5.1} and Theorem \ref{pro 5.3} if $x\in X(q)$. If $x$ is outside of $X(q)$, then the upper bound given by Theorem \ref{proposition 5.1} shows that $\lim_{k,l}B^{q,k,l}_{\leq \mu_{k,l}}(x)=0$, which finish the proof of the theorem.
\end{proof}

\subsection*{Proof of Strong Demailly-Bouche's holomorphic Morse inequalities} By combination of the following lemma, we can get the strong version (\ref{Bou2}) in Theorem \ref{Bou}.
\begin{lemma}[c.f.\cite{Dem89}]\label{Wit}
Denote by  $h^q$ the complex dimension of $ H^q(X,E^k\otimes F^l)$ and by $h^q_{\leq \mu_{k,l}}$ the complex dimension of $H^q_{\leq \mu_{k,l}}(X,E^k\otimes F^l)$ respectively, then the following holds:
\begin{align}
\sum_{0\leq j\leq q}(-1)^{q-j}h^j\leq \sum_{0\leq j\leq q}(-1)^{q-j}h^j_{\leq \mu_{k,l}}.\notag
\end{align}
\end{lemma}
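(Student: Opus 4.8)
The final statement to prove is Lemma \ref{Wit}, the algebraic inequality comparing the alternating sums $\sum_{0\leq j\leq q}(-1)^{q-j}h^j$ and $\sum_{0\leq j\leq q}(-1)^{q-j}h^j_{\leq \mu_{k,l}}$. This is a standard consequence of the spectral decomposition of the $\overline{\partial}$-Laplacian together with the fact that the de Rham–type complex $(\Omega^{0,\bullet}(X,E^k\otimes F^l),\overline{\partial})$ splits, in each degree, into its "low-energy" part $\mathcal{H}^\bullet_{\leq\mu_{k,l}}$ and its "high-energy" part. The plan is to exhibit the low-energy subcomplex, observe that $\overline{\partial}$ preserves it (since $\overline{\partial}$ commutes with $\Delta_{\overline{\partial}}$), show that its cohomology in degree $j$ has dimension exactly $h^j$ (the high-energy complex being acyclic), and then apply the purely linear-algebraic fact that for a finite complex of finite-dimensional spaces, the truncated alternating sums of the cohomology dimensions are bounded by the truncated alternating sums of the dimensions of the spaces themselves.

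\textbf{Step 1: the low-energy subcomplex.} For fixed $k,l$ write $\Delta=\Delta_{\overline{\partial}}$ acting on $\Omega^{0,j}(X,E^k\otimes F^l)$ for each $j$. Since $X$ is compact, $\Delta$ is self-adjoint with discrete spectrum, and $[\overline{\partial},\Delta]=0$, so $\overline{\partial}$ maps the span $\mathcal{H}^j_{\leq\mu_{k,l}}$ of eigenforms with eigenvalue $\leq\mu_{k,l}$ into $\mathcal{H}^{j+1}_{\leq\mu_{k,l}}$; similarly $\overline{\partial}^{*}$ maps $\mathcal{H}^{j}_{\leq\mu_{k,l}}$ into $\mathcal{H}^{j-1}_{\leq\mu_{k,l}}$. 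Thus $(\mathcal{H}^\bullet_{\leq\mu_{k,l}},\overline{\partial})$ is a finite subcomplex of finite-dimensional spaces, with the orthogonal complement $(\mathcal{H}^\bullet_{>\mu_{k,l}},\overline{\partial})$ a complementary subcomplex. On the high-energy part $\Delta$ is invertible, so by the usual Hodge-theoretic homotopy $\overline{\partial}\,\overline{\partial}^{*}\Delta^{-1}+\overline{\partial}^{*}\Delta^{-1}\overline{\partial}=\mathrm{id}$ the high-energy complex is exact. Hence the inclusion induces an isomorphism on cohomology, and
\begin{align}
\dim H^j\bigl(\mathcal{H}^\bullet_{\leq\mu_{k,l}},\overline{\partial}\bigr)=h^j\notag
\end{align}
for every $j$.

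\textbf{Step 2: the linear-algebra lemma.} It remains to prove: for any complex $0\to V^0\xrightarrow{d}V^1\xrightarrow{d}\cdots$ of finite-dimensional vector spaces, with $b^j=\dim H^j$ and $v^j=\dim V^j$, one has $\sum_{0\leq j\leq q}(-1)^{q-j}b^j\leq\sum_{0\leq j\leq q}(-1)^{q-j}v^j$ for every $q$. The standard proof: let $z^j=\dim\ker d|_{V^j}$ and $r^j=\dim\operatorname{im}(d\colon V^{j-1}\to V^j)$, so $v^j=z^j+r^{j+1}$, $b^j=z^j-r^j$, and $r^0=0$, $r^j\geq 0$. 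Then the telescoping identity
\begin{align}
\sum_{0\leq j\leq q}(-1)^{q-j}v^j-\sum_{0\leq j\leq q}(-1)^{q-j}b^j=\sum_{0\leq j\leq q}(-1)^{q-j}\bigl(r^{j+1}+r^j\bigr)=r^{q+1}\geq 0\notag
\end{align}
gives the claim (the internal terms cancel in pairs). Applying this to $V^j=\mathcal{H}^j_{\leq\mu_{k,l}}$, where $v^j=h^j_{\leq\mu_{k,l}}$ and $b^j=h^j$ by Step 1, yields exactly the asserted inequality.

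I do not expect a genuine obstacle here; the only point requiring a little care is making sure the truncation at level $q$ really does give a one-sided inequality rather than an equality, which is handled by the nonnegativity of the leftover boundary term $r^{q+1}$ in the telescoping computation above. One should also note explicitly that the high-energy complex being acyclic uses compactness of $X$ (to have a spectral gap and genuine eigenform decomposition) and that nothing about the specific metric $\omega_{k,l}$, the bundles $E$, $F$, or the foliation hypothesis is needed for this lemma — it is a formal consequence of Hodge theory valid for any compact Hermitian manifold and any holomorphic vector bundle, which is why it can be cited from \cite{Dem89}.
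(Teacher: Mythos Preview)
Your proof is correct and is precisely the standard argument. Note that the paper does not actually prove this lemma; it merely states it with a citation to \cite{Dem89}, so there is no in-paper proof to compare against. What you have written is essentially the argument one finds in Demailly's original treatment: the low-energy eigenspaces form a finite-dimensional subcomplex (because $\overline{\partial}$ commutes with $\Delta_{\overline{\partial}}$), the high-energy part is acyclic via the homotopy $\overline{\partial}^{*}\Delta^{-1}$, and then the truncated Euler-characteristic inequality $\sum_{j\leq q}(-1)^{q-j}b^j\leq\sum_{j\leq q}(-1)^{q-j}v^j$ follows from the telescoping identity leaving the nonnegative boundary term $r^{q+1}$. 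Your closing remarks about compactness being the only genuine hypothesis, and the irrelevance of the specific metric or foliation structure, are also on point.
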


\begin{theorem}
In fact, all the above inequalities we proved is for the case of $E^k\otimes F^l$ but they generalizes to the case of  $E^k\otimes F^l\otimes G$ straightly, where $G$ is a holomorphic vector bundle with rank $g\geq 2$. Since the estimates for the extremal functions $S^{q,k,l}_X$ is the same, while the estimates for $B^{q,k,l}_X$ are modified by a factor $g$ in the right hand side.
\end{theorem}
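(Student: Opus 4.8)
The plan is to explain why the entire machinery developed for $E^k\otimes F^l$ extends to $E^k\otimes F^l\otimes G$ for a holomorphic vector bundle $G$ of rank $g\geq 2$, paying attention only to the two places where the rank of $G$ enters. First I would observe that the localization procedure of Section \ref{localization} is unaffected by the presence of $G$: the scaling map $f^{(k,l)}$, the metrics $\omega_{k,l}$, $\omega_0$, the local expansions (\ref{metric 1})--(\ref{metric 2}), and the Laplacian expansion (\ref{Laplacian expansion}) all depend on $c(E)$ and $c(F)$ but not on the twisting by the fixed bundle $G$, because as $k,l,\tfrac{k}{l}\to\infty$ the curvature contribution of $G$ is of lower order and is absorbed into the remainder term $\epsilon_{k,l}\mathcal{D}_{k,l}$. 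Consequently Lemma \ref{lemma 3.1} carries over verbatim, with the only change that forms now take values in $(E^k\otimes F^l\otimes G)$ and the limiting model Laplacian $\Delta_{\overline{\partial},\gamma_0}$ acts on $\wedge^{0,q}(\mathbb{C}^n)\otimes\mathbb{C}^g$, i.e.\ on $g$ independent copies of the untwisted model.

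Next I would run the proof of Theorem \ref{Main theorem 1} and Theorem \ref{weak holomorphic morse inequalities} again in this setting. The extremal function estimate $\limsup S^{q,k,l}_X(x)\leq S^q_{x,\mathbb{C}^n}(0)$ is literally unchanged: an extremal section of unit norm, rescaled and passed to a limit, still solves $\Delta_{\overline{\partial},\gamma_0}\beta=0$, and evaluating a single component at a point is bounded by the same model extremal function (the fiber index of $G$ is just a spectator). The difference appears only in the passage from the extremal function to the Bergman kernel function via the inequality (\ref{lemma 2.1}): now the index $I$ runs not only over increasing multi-indices of length $q$ but also over a frame of $G$, so the sum $\sum_I S^{q,k,l}_{X,I}(x)$ has $g$ times as many nonvanishing terms in the limit, each equal to $S^q_{x,\mathbb{C}^n}(0)$ by (\ref{extremal function}). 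Hence $\limsup_{k,l,\frac{k}{l}\to\infty} B^{q,k,l}_X(x)\leq g\cdot S^q_{x,\mathbb{C}^n}(0)=g\cdot 1_{X(q)}(x)\big|\det_{\omega_0}(\Theta)_x\big|$, and integrating against $\omega_{k,l}^n/n!$ via Fatou as in Theorem \ref{weak holomorphic morse inequalities} produces exactly the factor $g$ in front of the integral in (\ref{Bou1}).

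For the strong version I would likewise re-examine Theorem \ref{proposition 5.1}, Lemma \ref{part2} and Theorem \ref{pro 5.3}. The upper bound (Theorem \ref{proposition 5.1}) picks up the factor $g$ for the same reason as above. For the lower bound one produces, at a point $x_0\in X(q)$, not one model form $\beta$ but $g$ of them: $\beta\otimes e_j$ for $j=1,\dots,g$, where $\{e_j\}$ is a local holomorphic frame of $G$ near $x_0$ made orthonormal at $x_0$; multiplying each by the cutoff $\chi_{k,l}$ gives $g$ nearly-orthogonal sections satisfying (i)--(iv) of Lemma \ref{part2}, and the spectral-decomposition argument of Theorem \ref{pro 5.3} applied to each shows $\liminf B^{q,k,l}_{\leq\mu_{k,l}}(x)\geq g\cdot 1_{X(q)}(x)\big|\det_{\omega_0}(\Theta)_x\big|$. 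Combining the two bounds yields $\lim B^{q,k,l}_{\leq\mu_{k,l}}(x)=g\cdot 1_{X(q)}(x)\big|\det_{\omega_0}(\Theta)_x\big|$, and Lemma \ref{Wit} then upgrades the weak inequalities to the strong ones (\ref{Bou2}).

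The argument is essentially bookkeeping, so the only genuine point requiring care — and the step I expect to be the main obstacle — is verifying that the frame of $G$ can be chosen so that the twisting does not interfere with the scaling estimates: one must check that a local holomorphic frame of $G$, after the $(k,l)$-dependent dilation, has transition data whose derivatives remain uniformly bounded on the scaled balls $B_{|z'|<r_{k,l},|z''|<r_{k,l}}$, so that the fiber metric of $G^{(k,l)}$ converges (with all derivatives) to the trivial metric on $\mathbb{C}^g$ and the extra terms are genuinely absorbed into $\epsilon_{k,l}\mathcal{D}_{k,l}$ in (\ref{Laplacian expansion}). This follows because $G$ is fixed and its curvature is bounded on the compact $X$, so after rescaling its contribution is $O(1/\sqrt{l})$, but it is the one place where the reduction to the line-bundle case must be justified rather than merely asserted.
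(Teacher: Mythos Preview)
Your proposal is correct and follows essentially the same approach as the paper; in fact the paper provides no separate proof at all, merely the one-line justification embedded in the theorem statement (``the estimates for $S^{q,k,l}_X$ are the same, while the estimates for $B^{q,k,l}_X$ are modified by a factor $g$''). Your write-up expands this into a careful account of where the factor $g$ enters and why the curvature of $G$ is negligible under rescaling, which is entirely in the spirit of the paper's intended argument.
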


\begin{remark}Let $E$ is a holomorphic line bundle, the kernel of the curvature of $E$ is a tangent space of a foliation of codimension $r$. Let $G$ be a holomorphic vector bundle on $X$. Then it is proved [{c.f. \cite[Corollary 0.2]{Bouche 91}} that  there exists a constant $C$ such that for $q=0,1, \cdots, n$, and $k\rightarrow \infty$,
\begin{align}
\dim H^q(X, E^k\otimes G)\leq Ck^r.\notag
\end{align}
\end{remark}
\begin{example}
 Let $M$ be a compact complex manifold. $E\rightarrow M$ be the trivial holomorphic vector bundle $\mathbb{C}^r\times M$ with $r\geq 2$. Equipped $E$ with the trivial hermitian metric, which can induce a hermitian metric $h$ on the holomorphic line bundle $O_E(1)$, which have positive curvature along the fiber and vanishes along the horizontal direction, i.e. the curvature $c(E)$ of $E$ has rank $r$ and $M$ is a foliation of the kernel of $c(E)$. Then from the above Remark, we can get that $\dim_\mathbb{C}H^q(P(E^*),O_E(k))\leq Ck^{r-1}$.
\end{example}

\begin{remark}Thanks to our local version of Demailly-Bouche's  holomorphic Morse inequalities, combined with the technique developed in \cite{Berman 06}, it is possible to extend the corresponding result in \cite{Berman 06}  of asymptotics of  super Toeplitz operator and  sampling sequences to the case of $E^k\otimes F^l$, where $E$ and $F$ are holomorphic line bundles satisfying the assumptions in Theorem \ref{Bou}. Since our main purpose of this paper is to develop local version of Demailly-Bouche's holomorphic Morse inequalities, we do not investigate such applications in this paper.
\end{remark}

\begin{remark}In \cite{HM12}, Hsiao-Li proved Morse type inequalities on CR manifolds with transversal CR $S^1$-action. The main strategy of their work is to adapt localization procedure in the case of  complex manifolds to the case of CR manifolds with transversal CR $S^1$-action. In view of this, it is natural to ask the following 
	\begin{question}
Let $X$ be a compact connected CR manifold with transversal CR $S^1$-action, let $\cL$ be the Levi form associated to $X$. Suppose that $\cL$ is everywhere degenerate and the maximal rank of $\cL$ is $r$ and the kernel of $\cL$ is foliated, i.e. there is a foliation $Y$ of $X$ of complex codimension $r$, such that the tangent space of the leaf at each point $x\in X$ is contained in the kernel of $\cL$. Let $L$ be a rigid Hermitian CR line bundle over $X$. Then can we get a better estimate  of the dimension of the Fourier components $H^q_{b,k}(X,L^l)$ of the Kohn-Rossi cohomology (say as in Theorem \ref{Bou}) compared with Hsiao-Li's Morse type inequalities for rigid Hermitian CR line bundles, where $k,l,\frac{k}{l}\rightarrow +\infty$?
		\end{question}

	\end{remark}

\subsection*{Acknowledgement} The author is deeply grateful to Professor Xiangyu Zhou for his constant help, encouragement, and inspiring discussions from which the author benifits a lot.


\bibliographystyle{fancy}


\end{document}